\sloppy \pagestyle{plain}\binoppenalty=10000 \relpenalty=10000
\newtheorem{definition}{Definitions}[section]
\newtheorem{theorem}{Theorem}[section]
\newtheorem{lemma}{Lemma}[section]
\newtheorem{example}{Example}[section]
\newtheorem{proposition}{Proposition}[section]
\newcommand{\subgroups}{\mathop{\sf Sub}\nolimits}
\newcommand{\rwr}{\mathop{\sf wr}\nolimits}
\title{Algebras of conjugacy classes of partial elements}
\date{}
\author
{Andrei V. Alexeevski, \qquad Sergey M. Natanzon}
\begin{document}

\dedicatory{Dedicated to Sergey Petrovich Novicov on the occasion of his 75th birthday}

\maketitle

\section*{Abstract}

In 2001 Ivanov and Kerov associated with the infinite permutation group $S_\infty$ certain commutative associative algebra $A_\infty$ called the algebra
of conjugacy classes of partial elements.
A standard basis of $A_\infty$ is labeled by Yang diagrams of all orders.

Mironov, Morozov, Natanzon, 2012, have proved that the completion of $A_\infty$ is isomorphic to the direct product of centers of group algebras of groups $S_n$. This isomorphism was explored in a construction of infinite dimensional Cardy-Frobenius algebra corresponding to
 asymptotic Hurwitz numbers.

In this work algebras of conjugacy classes of partial elements are defined for a wider class of infinite groups. It is proven that completion of any such algebra is isomorphic to the direct product of centers of group algebras of relevant subgroups.

\section*{Introduction}

Commutative Frobenius algebras with fixed linear functionals
are important for mathematical physics, since they 1-1 correspond to 2D closed topological field theories \cite{D1}.

Hurwitz numbers of degree $n$ generates a Frobenius algebra and linear functional on it. This algebra is isomorphic
to  the center $Z(k[S_n])$ of the group algebra of the permutation group $S_n$ over a field $k$. Therefore, Hurwitz numbers correspond
to certain 2D closed topological field theory \cite{D2}.
Classical Hurwitz theory and particularly  Hurwitz numbers were generalized to the coverings over surfaces with boundaries \cite{AN2}. These generalized Hurwitz numbers  correspond to certain open-closed topological field theory and to  more general Klein topological field theory \cite{AN,AN1,AN2}.

Classical Hurwitz numbers of all degrees lie in the base of the construction of infinite dimensional  2D closed topological field theory \cite{MMN4}. Corresponding
Frobenius algebra is algebra $A_{\infty}$ introduced by Ivanov and Kerov \cite{IK} in conjunction with studying the group $S_\infty$ of
finitary permutations of the set of natural numbers. We call $A_\infty$ the IK-algebra.

Definition of IK-algebra $A_{\infty}$  is based on the multiplication of
'partial permutations', i.e. pairs (d,s) consisting of a subset $d$ of the set of natural numbers $\mathbb{N}$ and a permutation $s$ acting on $d$ and trivially on $\mathbb{N}\setminus d$. The product is defined by formula
$$(d',s')\circ  (d'',s'') = ( d'\cup d'', s'\circ s'').$$
The basis of $A_{\infty}$ is formed by sums of elements of a conjugacy class of partial elements. (A conjugacy class
is  an orbit of $S_\infty$ action on partial elements.) Basic elements are in 1-1 correspondence with Yang diagrams of all orders.

IK-algebra is isomorphic to the algebra of shifted Schur functions \cite{OO}, and to the algebra of cut-and-join operators  in the frame of the theory of asymptotic Hurwitz numbers \cite{MMN1,MMN2}.

Completion of $A_\infty$ is isomorphic to the direct product $\prod\limits_{n} Z(k[S_n])$
of the centers of group algebras of $S_n$   \cite{MMN3,MMN4}.

\vspace{2ex}

In this work we define algebras of conjugacy classes of partial elements for a wider class of infinite groups and prove that completion of any such algebra is isomorphic to the direct product of centers of group algebras of relevant subgroups.

Instead of infinite permutation group $S_\infty$
we consider a group $G$ acting by automorphisms of a poset (partial ordered set) $\Lambda$. In the case of infinite permutation group
 $\Lambda$ is the poset of all finite subsets of the set of natural numbers and $\Lambda/G=\{0\}\cup\mathbb{N}$.

Representation $\eta$ of the poset $\Lambda$ in the poset of finite
subgroups of $G$ is fixed. Coincidence of the intersection of a conjugacy class $c$ of partial elements of G  with a conjugacy class of partial elements in a subgroup $G_\lambda=\eta(\lambda)$ is
an obligatory condition on $(\Lambda,G,\eta)$. This condition establishes relations between conjugacy classes in subgroups $G_\lambda$ and group $G$.

If the set of data $(\Lambda,G,\eta)$ is given then a pair $(\lambda,h)$ consisting of $\lambda\in \Lambda$ and $h\in G_\lambda$  is called a  partial element.
Algebra $A=A(\Lambda,G,\eta)$ is defined as linear envelope of formal sums of elements of a conjugacy class of partial elements. We call $A$ a generalized IK-algebra.
We prove that a completion of $A$ is isomorphic to
$\prod\limits_{l \in\Lambda/G} Z(k[G_{\lambda}])$, where $\lambda$ is a representative of the orbit $l\in\Lambda/G$ and $Z(k[G_{\lambda}])$ is the center of the group algebra of $G_{\lambda}$. We give also exact formulas connecting structural constants of algebras $A$ and
$\prod\limits_{l\in\Lambda/G} Z(k[G_{\lambda}])$.

\vspace{2ex}

In section 1 we provide axioms of an admissible family of subgroups of a group $G$. The most restrictive axiom concerns intersections of conjugacy classes of $G$ with subgroups $G_\lambda$ of the family. In section 2 we present a series of examples of admissible families of subgroups. All  examples are constructed as restricted wreath products of a finite group $F$ and the group $S_\infty$. Among this series of examples there is an infinite Weyl group
of type $B_\infty$.  In all cases $\Lambda$ is the poset  of finite subsets
of $\mathbb{N}$. Nevertheless, keeping in mind putative other examples, we develop the theory for arbitrary posets $\Lambda$ satisfying axioms.

In section 3 we define  a generalized IK-algebra $A=A(\{G_{\lambda}\})$ associated with an admissible family of subgrpoups.
Algebra $A$ is associative and commutative. We describe its structure constants.
In section 4 we prove, that $A$ is a projective limit of generalized IK-algebras $A_{\preceq\lambda}$ corresponding to admissible subfamilies
$\{G_\mu | \mu\preceq\lambda\}$ of groups $G_\lambda$.
In section 5 we express the structure constants of $A$ via structure constants of $\prod\limits_{l\in\Lambda/G} Z(k[G_{\lambda}])$ and vise versa.
In section 6 we construct a monomorphism $\varphi$ from $A$ to $\prod\limits_{l \in \Lambda/G} Z(k[G_{\lambda}])$.
In section 7 we define a completion $\bar A$ of algebra $A$ and prove that continuation of $\varphi$ is the isomorphism of $\bar A$ and $\prod\limits_{l \in \Lambda/G} Z(k[G_{\lambda}])$.
\vspace{2ex}

%%%%%%%%%%%%%%%%%%%%%%%%%%%%%%%% ADMISSIBLE FAMILY

\section{Admissible family of subgroups}

We call poset (partially ordered set) $\Lambda$ \textit{admissible} if the following conditions hold:
\begin{itemize}
\item $\Lambda$ has minimal element $0$,
\item for each $\lambda$ the set $\{\lambda' |\lambda'\preceq\lambda\}$ is finite,
\item each finite subset $F\subset\Lambda$ has a supremum $^\wedge F\in\Lambda$.
\end{itemize}

The supremum $\lambda'\wedge \lambda''$ of two elements of an admissible poset $\Lambda$ defines a multiplication on $\Lambda$.
This multiplication is commutative and associative because $(\lambda'\wedge\lambda'')\wedge\lambda'''=\lambda'\wedge(\lambda''\wedge\lambda''')={}^\wedge\{\lambda',\lambda'',\lambda'''\}$. Thus, $(\Lambda,\wedge)$ is a commutative semigroup with the unit $0$.

In examples below $\Lambda$ is a set of finite subsets of natural numbers $\mathbb{N}$, minimal element is the empty set and a partial ordering is the inclusion of subsets.

Let $G$ be a group acting on  $\Lambda$ by automorphisms, i.e. by transforms preserving partial order $\preceq$. Then $G$ acts also on  the semigroup $(\Lambda, \wedge)$ by  isomorphisms.
Denote by $\subgroups{G}$ the poset of subgroups of group $G$ with partial ordering being inclusion of subgroups and with action of $G$ on  $\subgroups{G}$ by conjugations. Let $\eta:\Lambda\to\subgroups{G}$ be a morphism of posets compatible with the action of $G$.  Denote the image of $\lambda\in\Lambda$ by  $G_\lambda\in\subgroups{G}$. Note, that subgroups $G_{\lambda'}$ and $G_{\lambda''}$ may coincide even if  $\lambda'\ne\lambda''$.

The set $L=\Lambda/G$ of $G$-orbits in admissible poset $\Lambda$ inherits partial ordering : $l'\in L$  precedes $l''\in L$ if there are elements $\lambda'\in l$ and $\lambda''\in l''$ such that $\lambda' \preceq \lambda''$.
For example, if  $\Lambda$ is a set of finite subsets of natural numbers, then $L$ is the ordered set  $L=\{0,1,2,\dots\}$.

Evidently, the element $0$ form an orbit of $G$ and thus, $0$ is minimal element of $L$. It is clear also, that for each $l\in L$ there are
finitely many $l'\in L$ such that $l'\preceq l$.

Denote by $l'\wedge l''$ the set of orbits of all elements $\lambda'\wedge\lambda''$, where $\lambda'\in l'$, $\lambda''\in l''$. Clearly, if
$\lambda'\wedge\lambda''\in l$ then all other elements of the orbit $l$ also are $\wedge$-products of elements of $l'$ and $l''$.

Fix $l\in L$. All subgroups $G_\lambda$, $\lambda\in l$ are conjugated in $G$. Denote this conjugacy class of subgroups by $G_l$ .
A representative of this conjugacy class we will also denote by $G_l$ unless this  leads to a confusion.

Let $\lambda$ be an element of $\Lambda$ and $h$ be an element of the subgroup $G_\lambda$. Then,
following \cite{IK}, we call pair $(\lambda,h)$  \textit{a partial element} of group $G$.

Group $G$ acts on the set of partial elements: $(\lambda,h)\to (g\lambda,ghg^{-1})$ for $g\in G$. We call this action \textit{the conjugation of partial elements} because $G_{g\lambda}=gG_\lambda g^{-1}$. An orbit $G(\lambda,h)$ of a partial element is called \textit{a conjugacy class of partial elements}.

\begin{definition} \label{def.admissible} A triple $(\Lambda,G{\colon}\!\Lambda,\eta:\Lambda\to \subgroups G)$  consisting of an admissible poset $\Lambda$, a group $G$ acting by automorphisms of $\Lambda$ and a $G$-compatible morphism $\eta$ of posets  is called admissible set of data, and the image $\eta(\Lambda)\subset \subgroups G$ is called an admissible family of subgroups, if
\begin{enumerate}
\item $G_0=\{e\}$,
\item $G_\lambda$ is finite subgroup for each $\lambda\in\Lambda$,
\item  $\wedge$-product $l'\wedge l''$ of any two $G$-orbits $l', l''\in L=\Lambda/G$ is a finite set
\item if  $l'\wedge x$ is equal to $l''\wedge x$ for all $x\in L$ and $G_{l'}=G_{l''}$ then $l'=l''$
\item \label{five}  for any $\lambda', \lambda'' \preceq \lambda$ and any two partial elements $(\lambda',h'), (\lambda'',h'')$ conjugated in group $G$, they are also conjugated in $G_\lambda$.
\end{enumerate}
\end{definition}

Condition \ref{five} is restrictive one. It implies, for example,  that for each $\lambda\in\Lambda$ the factor-group
 $\mathcal{N}_G(G_\lambda)/G_\lambda$  the normalizer acts on the set of conjugacy classes of $G$ trivially.

Writing \textit{'an admissible family of subgroups $\eta(\Lambda)\subset\subgroups{G}$} we always assume that
 $(\Lambda,G,\eta)$ is an admissible set of data.

Let $\hat G$ be a join of all subgroups $G_\lambda$, $\lambda\in \Lambda$. The set $\hat G$ is a subgroup because $G_{\lambda'}, G_{\lambda''}\subset G_{\lambda'\wedge\lambda''}$ for any $\lambda', \lambda''\in \Lambda$. Clearly, $\hat G$ is a  normal subgroup and the triple $(\Lambda,\eta,\hat G)$ generates the same admissible family of subgroups as $(\Lambda,\eta,G)$ does. Below we assume additionally that $G=\hat G$.

Note that group $G$ may be finite or infinite.

%%%%%%%%%%%%%%%%%%%%%%%%%%%%%%%%% EXAMPLES
\section{Examples of admissible families of subgroups}\label{examples} \label{s.examples}
\begin{example}
Admissible family of subgroups of $S_\infty$.
\end{example}

Let $G=S_\infty$  be the group of all finitary permutations of the set of natural numbers $\mathbb{N}$  and $\Lambda$ be the  poset of all finite subsets of $\mathbb{N}$. Evidently, $\Lambda$ is an admissible poset with empty set being its minimal element.  $S_\infty$ acts on $\mathbb{N}$ and therefore acts on $\Lambda$. Define the morphism $\eta:\Lambda\to\subgroups{G}$ by setting $\eta(\lambda)=S_\lambda$
where $S_\lambda$ denotes the subgroup
of all permutations acting trivially on $\mathbb{N}\setminus\lambda$.

Note that $S_\emptyset$ is equal to $\{e\}$  as well as all subgroups $S_{\{i\}}$ for $i\in\mathbb{N}$; all they are images of different elements of $\Lambda$. No other coincidences are in the set $\eta(\Lambda)$.

\begin{theorem}\label{ex1}
The set of data $(\Lambda,S_\infty,\eta)$ is admissible and $\eta(\Lambda)$ is admissible family of subgroups.
\end{theorem}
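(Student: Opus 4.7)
The plan is to verify each of the five axioms of Definition \ref{def.admissible} in turn for the data $(\Lambda, S_\infty, \eta)$. The first two conditions are immediate: by the definition of $\eta$, $G_0 = S_\emptyset = \{e\}$, and for any finite $\lambda \subset \mathbb{N}$ the subgroup $S_\lambda$ is isomorphic to the finite symmetric group $S_{|\lambda|}$.

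For condition 3, I would first identify the orbit set. Since $S_\infty$ acts transitively on finite subsets of $\mathbb{N}$ of the same cardinality, the $S_\infty$-orbit of $\lambda$ is determined by $|\lambda|$, so $L = \Lambda/S_\infty$ identifies with $\{0,1,2,\dots\}$. Given orbits $l' = [\lambda']$ and $l'' = [\lambda'']$ with representatives of sizes $n'$ and $n''$, the $\wedge$-product $\lambda' \wedge \lambda'' = \lambda' \cup \lambda''$ has cardinality between $\max(n',n'')$ and $n'+n''$, so $l' \wedge l''$ is a finite subset of $L$. For condition 4, I would observe that $0 \in L$ is the unit of $\wedge$: plugging $x = 0$ into the hypothesis $l' \wedge x = l'' \wedge x$ immediately yields $l' = l''$, so the condition holds (the premise $G_{l'} = G_{l''}$ is not even needed here).

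The main content is condition 5. Given $\lambda', \lambda'' \subseteq \lambda$ and partial elements $(\lambda', h')$, $(\lambda'', h'')$ conjugate in $S_\infty$ by some $g$, so that $g\lambda' = \lambda''$ and $g h' g^{-1} = h''$, I construct a conjugating element $\tilde g \in S_\lambda$ as follows. The restriction $g|_{\lambda'} : \lambda' \to \lambda''$ is a bijection. Because $g$ is a bijection of $\mathbb{N}$, $|\lambda'| = |\lambda''|$, hence $|\lambda \setminus \lambda'| = |\lambda \setminus \lambda''|$; choose any bijection $\sigma : \lambda \setminus \lambda' \to \lambda \setminus \lambda''$ and define $\tilde g$ to equal $g$ on $\lambda'$, to equal $\sigma$ on $\lambda \setminus \lambda'$, and to be the identity on $\mathbb{N}\setminus \lambda$. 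Then $\tilde g \in S_\lambda$ and $\tilde g \lambda' = \lambda''$.

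It remains to check $\tilde g h' \tilde g^{-1} = h''$. For $x \in \lambda''$, we have $\tilde g^{-1}(x) \in \lambda'$, where $\tilde g$ agrees with $g$, so $\tilde g h' \tilde g^{-1}(x) = g h' g^{-1}(x) = h''(x)$. For $x \in \lambda \setminus \lambda''$, we have $\tilde g^{-1}(x) \in \lambda \setminus \lambda'$, where $h'$ acts trivially, so $\tilde g h' \tilde g^{-1}(x) = x = h''(x)$ (the last equality holds because $h''$ acts trivially outside $\lambda''$). For $x \notin \lambda$, all three maps fix $x$. This is more a bookkeeping argument than a deep statement; the only subtle point is that the extension of $g|_{\lambda'}$ must stay inside $\lambda$, which is possible precisely because $\lambda', \lambda'' \subseteq \lambda$ and the two complements within $\lambda$ have equal cardinality. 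With this the proof is complete.
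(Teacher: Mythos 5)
Your proof is correct, and it differs from the paper's in a way worth noting. The paper dispatches conditions 1--4 by reference to Ivanov--Kerov and to evident facts, and for condition 5 argues in two steps: first superpose $\lambda'$ and $\lambda''$ by a permutation of $S_\lambda$, then, having reduced to $\lambda'=\lambda''$, invoke the classical fact that permutations with the same cycle type (Young diagram) are conjugate, the conjugator being taken inside $S_{\lambda'}$ so that the pair structure is preserved. You instead verify all five axioms explicitly (your observation that condition 4 follows by taking $x=0$, since $0$ is the $\wedge$-unit of $L$, is a clean shortcut), and you prove condition 5 by a direct surgery on the given conjugator: keep $g$ on $\lambda'$, extend by any bijection $\lambda\setminus\lambda'\to\lambda\setminus\lambda''$, and set the identity outside $\lambda$. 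This avoids cycle-type reasoning altogether and also sidesteps the small subtlety in the two-step reduction that the second conjugator must stabilize $\lambda'$ (your $\tilde g$ carries $\lambda'$ to $\lambda''$ by construction). The only point you pass over quickly, which is worth one extra clause, is that in the computation $\tilde g h'\tilde g^{-1}(x)=g h' g^{-1}(x)$ for $x\in\lambda''$ you use that $h'$ preserves $\lambda'$, so that $\tilde g$ agrees with $g$ not only at $\tilde g^{-1}(x)$ but also at $h'(\tilde g^{-1}(x))$; with that remark the argument is complete. The paper's route is shorter if one is willing to quote standard symmetric-group facts; yours is more self-contained and generalizes more readily (indeed it is closer in spirit to the paper's own proof for $F\rwr S_\infty$).
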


\begin{proof} In fact, it was proven in \cite{IK}.  In our axiomatization (see definition \ref{def.admissible} ) we should check that if
two partial elements $(\lambda',h')$ and $(\lambda'',h'')$ belong to $S_\lambda$ (i.e. $\lambda',\lambda''\preceq\lambda$ and therefore
$S_{\lambda'},S_{\lambda''}\subset S_\lambda$)  and they are conjugated in $S_\infty$ then they are conjugated in $S_\lambda$.
Indeed, conjugation in $S_\infty$ implies that $|\lambda'|=|\lambda''|$ and clearly, two subsets of $\lambda$ with equal cardinality can be superposed by a permutation from $S_\lambda$. We may assume that $\lambda'=\lambda''$. The conjugation in $G$ means also coincidence of cyclic types (equivalently, Young diagrams) of permutations $h'$ and $h''$. Hence, $h'$ and $h''$ are conjugated in the $S_\lambda$.
\end{proof}

\begin{example}
Admissible family of subgroups of  the  group $F\rwr S_\infty$
\end{example}
Here by $G=F\rwr S_\infty$ is denoted a restricted wreath product of finite group $F$ and the group $S_\infty$ of finitary permutations of $\mathbb{N}$. By definition, an element of $G$ is a sequence $(s;f_1,f_2,\dots)$ of elements $s\in S_\infty$ and $f_i\in F$ such that all but
 finitely many  $f_i$ are identity elements. Product of two
elements $h'= (s';f_1',f_2',\dots)$ and $h''= (s'';f_1'',f_2'',\dots)$ is $h'h''= (s's''; f_{s''^{-1}(1)}, f_{s''^{-1}(2)}, \dots)$.

Note that $G$ is semidirect product of $S_\infty$ and $F_\infty=F\times F \dots$ where $F_\infty$ denotes the direct product of copies of the group $F$ marked by natural numbers with finitary condition: $(f_1,f_2,\dots)\in F_\infty$ implies that  finitely many components $f_i\ne e$.

Let $\Lambda$ be the set of all finite subsets of $\mathbb{N}$.  Group $G$ acts on $\mathbb{N}$ and therefore $G$ acts on $\Lambda$. The kernel of this action is $F_\infty$.

We call a subset $\varsigma\in\Lambda$ the support of an element
$h= (s;f_1,f_2,\dots)\in G$ if for  each $n\in\varsigma$ either $s(n)\ne n$ or $f_n\ne e$  and for each $m\notin \lambda$ it is true that  $s(m)=m$ and $f_m=e$.

Define $\eta:\Lambda\to\subgroups{G}$ by setting $\eta(\lambda)=G_\lambda$ where $G_\lambda$ is the subgroup of all elements such that their support $\varsigma$ is contained in $\lambda$.

\begin{theorem}\label{ex1}
The set of data $(\Lambda,F\rwr S_\infty,\eta)$ is admissible and $\eta(\Lambda)$ is an admissible family of subgroups.
\end{theorem}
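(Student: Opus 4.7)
The plan is to verify the five conditions of Definition \ref{def.admissible}. Conditions (1) and (2) are immediate: the support constraint forces $G_\emptyset = \{e\}$, and $G_\lambda \cong F^{|\lambda|} \rtimes S_\lambda$ is finite of order $|F|^{|\lambda|} \cdot |\lambda|!$. Since $G$ acts transitively on finite subsets of $\mathbb{N}$ of fixed cardinality, the orbit set $L = \Lambda/G$ is identified with $\{0,1,2,\dots\}$ via $l \mapsto |\lambda|$. For orbits of sizes $k', k''$, every element $\lambda' \wedge \lambda''$ has cardinality between $\max(k',k'')$ and $k'+k''$, so $l' \wedge l''$ is a finite set of orbits, giving (3). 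Condition (4) is automatic here: taking $x=0$ in the hypothesis yields $l' = l' \wedge 0 = l'' \wedge 0 = l''$.

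The substantive content is condition (5). Suppose $\lambda', \lambda'' \preceq \lambda$ and $(\lambda',h')$, $(\lambda'',h'')$ are conjugate in $G$ via some $g$, i.e.\ $g\lambda' = \lambda''$ and $gh'g^{-1} = h''$. Then $|\lambda'| = |\lambda''|$, so one can choose a permutation $s \in S_\lambda \subset G_\lambda$ mapping $\lambda'$ bijectively onto $\lambda''$. Replacing $h'$ by $sh's^{-1}$ reduces us to the case $\lambda' = \lambda''$, with $h', h''$ both lying in $G_{\lambda'} \cong F \rwr S_{\lambda'}$ and still conjugate in the ambient group $G$.

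At this stage I would invoke the standard parametrization of conjugacy classes in wreath products: an element $(\sigma; f_1, f_2, \dots)$ of $F \rwr S_n$, or equivalently of $F \rwr S_\infty$ among finitary elements, is classified up to conjugacy by the multiset, indexed by the cycles of $\sigma$, recording for each cycle both its length and the $F$-conjugacy class of the corresponding cycle product (the product of the $f_i$ along the cycle, whose $F$-conjugacy class is independent of the chosen starting point). Equality of these multisets for $h'$ and $h''$, forced by their $G$-conjugacy, already implies conjugacy inside the finite wreath product $G_{\lambda'} \subset G_\lambda$. Composing this inner conjugator with $s$ produces the required element of $G_\lambda$ carrying $(\lambda',h')$ to $(\lambda'',h'')$.

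The main obstacle is the classical classification of wreath-product conjugacy classes by labelled cycle types, together with the observation that the same invariant governs conjugacy both in the finite groups $F \rwr S_n$ and, on finitary elements, in $F \rwr S_\infty$. Once that fact is in hand, the reduction to a statement inside the finite subgroup $G_{\lambda'}$ parallels the $S_\infty$ case already treated above, and the verification of axioms (1)--(4) is purely combinatorial.
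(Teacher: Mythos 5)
Your proof is correct, but for the one substantive axiom, condition (5), you take a genuinely different route from the paper. The paper's argument is entirely self-contained: after the same first reduction to $\lambda'=\lambda''$ via a permutation of $\lambda$, it reduces further to the case where the two elements have the \emph{same support} $\varsigma'$, observes that any conjugator $g\in G$ must then preserve $\varsigma'$, and replaces $g$ by an element $g'\in G_{\varsigma'}\subset G_\lambda$ acting on $G_{\varsigma'}$ in the same way (only the coordinates of $g$ inside $\varsigma'$ matter for the conjugation), so no description of conjugacy classes is ever needed. You instead invoke the classical parametrization of conjugacy classes of $F\rwr S_n$ (and, on finitary elements, of $F\rwr S_\infty$) by cycle lengths labelled with $F$-conjugacy classes of cycle products, and conclude that equality of this invariant forces conjugacy already inside $G_{\lambda'}$. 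That is valid, and it buys something the paper's proof does not make explicit: a concrete labelling of the conjugacy classes of partial elements generalizing Young diagrams, exactly parallel to the $S_\infty$ case. The cost is reliance on an external classical theorem, plus one small bookkeeping step you leave implicit: when passing from the $G$-invariant (which ignores the infinitely many trivial fixed points) to conjugacy in the finite group $F\rwr S_{\lambda'}$, you need the numbers of trivial $1$-cycles inside $\lambda'$ to match, which holds because $|\lambda'|=|\lambda''|$ and the support sizes agree; a sentence to that effect would close the argument. Your verification of axioms (1)--(4), including the observation that (4) is automatic by taking $x=0$, is more explicit than the paper's (which simply says only (5) needs checking) and is fine.
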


\begin{proof} The poset $\Lambda$ is the same as in previous example and hence is admissible. It is sufficient to prove condition \ref{five} of the definition \ref{def.admissible}.  Let $(\lambda',h')$ and $(\lambda'',h'')$ be two partial elements such that both of them belong to
$G_\lambda$ (i.e. $\lambda',\lambda''\preceq\lambda$ and therefore $G_{\lambda'},G_{\lambda''}\subset G_\lambda$)  and they are conjugated in $G$. We should prove that they are conjugated in $G_\lambda$.
Indeed, conjugation in $G$ implies that $|\lambda'|=|\lambda''|$ and clearly, two subsets of $\lambda$ with equal cardinality can be superposed by a permutation from $S_\lambda=G_\lambda/F_\lambda$ where $F_\lambda$ is the subgroup of $F_\infty$ consisting of all elements with the support in $\lambda$. Thus, we may assume that $\lambda'=\lambda''$. Denote by $\varsigma'$ (resp., $\varsigma''$) the support of the element $h'$ (resp., $h''$). Clearly, $\varsigma', \varsigma'' \preceq \lambda$ (generally, $\varsigma', \varsigma''$ may not be equal to $\lambda$). We are given that the partial
elements are conjugated in $G$, therefore $|\varsigma'|= |\varsigma''|$. Hence there is permutation in $G_\lambda$ that superpose  $\varsigma'$ and $\varsigma''$. Thus, we may assume $\varsigma'= \varsigma''$. Evidently, the element $g\in G$ such that $g(\lambda',h')=(\lambda'',h'')$ must preserve
$\varsigma'$. Obviously, there is element $g'\in G_{\varsigma'}$ such that $g'$-action on $G_{\varsigma'}$ coincides with the action of $g$ on $G_{\varsigma'}$. Note that $G_{\varsigma'}\subset G_\lambda$ and we proved conjugation of the given partial elements in the $G_\lambda$.
\end{proof}

\vspace{2ex}

One of partial cases of the restricted wreath product  is  $B_\infty$, infinite dimensional group of finitary  automorphisms of the root system
$\Sigma(B_\infty) = \{ \pm e_i, \pm e_i\pm e_j | i, j \in \mathbb{N}, i\ne j\}$ where $\{e_i | i\in \mathbb{N}\}$ is fixed orthogonal basis of
Euclidean space $\mathbb{R}^\infty$. In this case $F=\{\pm 1\}$ is two-element group. The group $G=F\rwr S_\infty$ is the Weyl group for root system $\Sigma(B_\infty)$.

It is of interest to test other classical root systems and corresponding Weyl groups as a sources of admissible families of subgroups. Weyl group of type $A_\infty$ is isomorphic to $S_\infty$, see example 1 (we have to suppose, that Weyl group of type $A_0$ is equal to $\{e\}$) .  Weyl group of type $C_\infty$ is isomorphic to the Weyl groups of type $B_\infty$. In the case of type $D_\infty$ there is an obstacle to define an admissible family of subgroups. Indeed, there are pairs of different conjugacy classes of the Weyl group of type $D_{2n}$ that belong to one conjugacy class of $D_m$, $m>2n$ (see, for example, \cite{Carter}). Therefore,  condition \ref{five} of the definition  \ref{def.admissible} is not satisfied.

%%%%%%%%%%%%%%%%%%%%%%%%%%%%%%%%%%%%%%%%%%%%%%% ALGEBGA A

\section{Algebra of conjugacy classes of partial elements}

Let $\eta(\Lambda)$ be an admissible family of subgroups of a group $G$. Denote by $E=E(\Lambda,G,\eta)$ the set of all partial elements.
Define multiplication on $E(\Lambda,G,\eta)$  by formula
$$(\lambda',h')(\lambda'',h'') = (\lambda'\wedge\lambda'',h'h'')$$
Clearly, $h'h''$ is an element of the subgroup $G_{\lambda'\wedge\lambda''}$, thus $(\lambda'\wedge\lambda'',h'h'')$ is a partial element. The multiplication is associative since the multiplication $\lambda'\wedge\lambda''$ in poset $\Lambda$ is associative.

Denote by $\Omega$ the set of conjugacy classes of partial elements (i.e. the set of $G$-orbits on the set of all partial elements).

Let $(\lambda,h)$ be a partial element. Denote by $l$ the orbit of $\lambda\in\Lambda$ and by $c$ the conjugacy class of $h$ in the group $G$.

\begin{lemma}  For fixed $G$-orbit $l\in L$ and conjugacy class $c$ of group $G$ the subset of partial elements  $\{(\lambda,h) | \lambda\in l, h\in c\}$ is either empty or coincides with a conjugacy class of partial elements.
 \end{lemma}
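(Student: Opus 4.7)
The plan has two parts: verify that
$S_{l,c}=\{(\lambda,h)\colon\lambda\in l,\ h\in c\}$ is $G$-stable (easy)
and then that any two of its elements lie in a common conjugacy class of
partial elements (the real content, delivered by condition 5 of
Definition \ref{def.admissible}). Stability is immediate: for
$(\lambda,h)\in S_{l,c}$ and $g\in G$, the conjugate
$g\cdot(\lambda,h)=(g\lambda,ghg^{-1})$ still has first coordinate in
the $G$-orbit $l$ and second coordinate in the $G$-conjugacy class $c$,
so it lies in $S_{l,c}$. Thus $S_{l,c}$ is automatically a union of
conjugacy classes of partial elements, and the question is whether, when
nonempty, it reduces to a single class.

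For the main step I would take arbitrary elements
$(\lambda_1,h_1),(\lambda_2,h_2)\in S_{l,c}$ and merge their first
coordinates into a common ambient via the $\wedge$-operation on $\Lambda$.
Set $\lambda=\lambda_1\wedge\lambda_2$, which exists by admissibility of
$\Lambda$. Then $\lambda_1,\lambda_2\preceq\lambda$, and because $\eta$
is a morphism of posets $G_{\lambda_1},G_{\lambda_2}\subseteq G_\lambda$,
so $h_1$ and $h_2$ both lie in the finite group $G_\lambda$. By
hypothesis $\lambda_1$ and $\lambda_2$ lie in the same $G$-orbit of
$\Lambda$ and $h_1,h_2$ lie in the same $G$-conjugacy class of $G$. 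This
is exactly the configuration to which condition 5 of the admissibility
definition applies, yielding an element $g\in G_\lambda\subseteq G$ with
$g\lambda_1=\lambda_2$ and $gh_1g^{-1}=h_2$, i.e.\
$g\cdot(\lambda_1,h_1)=(\lambda_2,h_2)$. Hence the two partial elements
are in the same conjugacy class.

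I do not anticipate any serious obstacle: the $\wedge$-operation on
$\Lambda$ was included in the admissibility definition precisely so that
any finite collection of poset-elements can be bounded above, and
condition 5 is tailored to convert agreement of orbit data into a
concrete conjugating element sitting inside a single finite subgroup
$G_\lambda$. If one prefers a stepwise presentation, one can first use
$\lambda_1\sim_G\lambda_2$ to choose $g_1\in G$ with
$g_1\lambda_1=\lambda_2$ and pre-conjugate $(\lambda_1,h_1)$ to the
partial element $(\lambda_2,g_1h_1g_1^{-1})$, reducing to the case of a
common first coordinate before invoking condition 5 with
$\lambda'=\lambda''=\lambda_2$ to match the second coordinates inside
$G_{\lambda_2}$. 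Either route presents the proof as a direct corollary
of the admissibility axioms.
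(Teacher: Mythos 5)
There is a genuine gap at the crucial step, namely in how you invoke condition \ref{five} of Definition \ref{def.admissible}. The hypothesis of that condition is that the two partial elements $(\lambda',h')$ and $(\lambda'',h'')$ are \emph{already conjugated in $G$ as partial elements}, i.e.\ that a single $g\in G$ satisfies $g\lambda'=\lambda''$ and $gh'g^{-1}=h''$ simultaneously (this is the ``conjugation of partial elements'' defined in Section 1, and it is how the paper itself uses the condition when verifying the examples: the deduction $|\lambda'|=|\lambda''|$ from ``conjugation in $S_\infty$'' only makes sense for the diagonal action on pairs). What the condition adds is that such a $g$ can then be replaced by an element of $G_\lambda$. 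In your situation you only know that $\lambda_1,\lambda_2$ lie in the same $G$-orbit and that $h_1,h_2$ lie in the same conjugacy class of $G$, i.e.\ the two coordinates are conjugate \emph{separately}, by possibly different group elements. Producing one $g$ that does both jobs at once is exactly the content of the lemma, so applying condition \ref{five} to this configuration assumes what is to be proved. The same issue persists in your stepwise variant: after arranging a common first coordinate $\lambda_2$, you know $g_1h_1g_1^{-1}$ and $h_2$ are conjugate in $G$ as group elements, but not that the pairs $(\lambda_2,g_1h_1g_1^{-1})$ and $(\lambda_2,h_2)$ are conjugate in $G$, which is what the condition's hypothesis demands; and under the weaker reading in which the condition only supplies some $g_0\in G_{\lambda_2}$ with $g_0(g_1h_1g_1^{-1})g_0^{-1}=h_2$, you would additionally need $g_0\lambda_2=\lambda_2$, which holds in the paper's examples but is nowhere stated as an axiom.

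To be fair, the paper's own proof of this lemma is a one-line ``follows immediately from the definition,'' so the difficulty is partly inherited from the looseness of condition \ref{five} itself (its literal pair-conjugacy reading is what the example verifications check, while the present lemma and the intersection property used in Section 5 need the passage from coordinatewise to simultaneous conjugacy). But as a self-contained argument, your proof does not close that passage: the reduction via $\lambda_1\wedge\lambda_2$ and the orbit condition is fine, while the decisive step that matches the second coordinates by an element stabilizing the first coordinate is asserted rather than derived from the axioms.
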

\begin{proof} By definition, a pair $(\lambda,h)$ is a partial element if and only if $h\in G_\lambda$.  Lemma follows immediately from the  definition \ref{def.admissible}
\end{proof}

Thus, a conjugacy class of partial elements $\omega\in\Omega$ we may (and will) denote by $\omega=(l,c)$, where $l\in L$ and  $c$ is a conjugacy class of group $G$.

We denote by $e_\omega$ the formal sum of all partial elements of a conjugacy class $\omega\in\Omega$.  If
$\omega=(l,c)$ then we also use denotation $e_{(l,c)}$; if $(l,c)=\emptyset$ then we put $e_{(l,c)}=0$.

Let $A= A(\Lambda,G,\eta)$ be the linear envelope of all elements $e_\omega$, $\omega  \in\Omega$ over a field $k$.

%\vspace{1ex}

\begin{theorem} The multiplication on partial elements induces a structure of associative commutative algebra on $A(\Lambda,G,\eta)$.
\end{theorem}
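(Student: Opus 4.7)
The plan is to first lift the product $(\lambda',h')(\lambda'',h'')=(\lambda'\wedge\lambda'',h'h'')$ bilinearly to the free $k$-vector space $kE$ on all partial elements; on $kE$ this multiplication is associative (inherited from associativity of $\wedge$ and the group law on $G$), and the diagonal conjugation action $g\cdot(\lambda,h)=(g\lambda,ghg^{-1})$ of $G$ acts by algebra automorphisms, because
$$g\bigl((\lambda',h')(\lambda'',h'')\bigr)=(g\lambda'\wedge g\lambda'',\,gh'h''g^{-1})=\bigl(g\cdot(\lambda',h')\bigr)\bigl(g\cdot(\lambda'',h'')\bigr),$$
by $G$-equivariance of $\wedge$ and of $\eta$. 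Each class sum $e_\omega$ is $G$-invariant by definition, so every product $e_{\omega'}\cdot e_{\omega''}$ computed in $kE$ is $G$-invariant and therefore a (possibly infinite) $k$-linear combination of the $e_\omega$.

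Next I would check that the expansion is finite. If a partial element $(\lambda,h)$ has nonzero coefficient, its $G$-orbit lies in the finite set $l'\wedge l''$ (admissibility), and for any fixed representative $\lambda$ the element $h\in G_\lambda$ lies in one of finitely many $G$-conjugacy classes, since $G_\lambda$ itself is finite; hence only finitely many $\omega=(l,c)$ contribute. The coefficient of a single $(\lambda,h)$ is also finite, as the pairs $(\lambda',\lambda'')$ with $\lambda',\lambda''\preceq\lambda$ and $\lambda'\wedge\lambda''=\lambda$ form a finite set (by the finiteness of $\{\mu\preceq\lambda\}$) and for each such pair $h'$ ranges over the finite set $c'\cap G_{\lambda'}$ while $h''=h'^{-1}h$ is determined. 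Associativity on $A$ follows immediately, and $(0,e)$ is a two-sided identity.

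Commutativity is the only nontrivial point; I would prove it by a swap-and-average argument. By $G$-invariance of the product in $kE$, for each class $\omega\in\Omega$ the coefficient of every partial element $(\lambda,h_0)\in\omega$ in $e_{\omega'}e_{\omega''}$ equals the single number $N^{12}_\omega$ (the coefficient of $e_\omega$), and similarly $N^{21}_\omega$ denotes the coefficient of $e_\omega$ in $e_{\omega''}e_{\omega'}$. For a nonempty $\omega=(l,c)$ choose $\lambda\in l$, so that $c\cap G_\lambda$ is nonempty and $\{\lambda\}\times(c\cap G_\lambda)$ is a finite subset of $\omega$ by the Lemma above. Summing coefficients over this fiber yields
$$\bigl|\{(x,y)\in\omega'\times\omega''\;:\;xy\in\{\lambda\}\times(c\cap G_\lambda)\}\bigr|=N^{12}_\omega\cdot|c\cap G_\lambda|,$$
and symmetrically the set of pairs $(y,x)\in\omega''\times\omega'$ with $yx\in\{\lambda\}\times(c\cap G_\lambda)$ has cardinality $N^{21}_\omega\cdot|c\cap G_\lambda|$.

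The simple swap $(x,y)\mapsto(y,x)$ is a bijection between these two sets: the $\Lambda$-constraint $\lambda'\wedge\lambda''=\lambda$ is manifestly symmetric, and $h'h''\in c\cap G_\lambda$ is equivalent to $h''h'\in c\cap G_\lambda$, because $h''h'=h'^{-1}(h'h'')h'$ is $G$-conjugate to $h'h''$ (hence in the same $G$-class $c$) and because the subgroup $G_\lambda$ contains both products whenever it contains $h'$ and $h''$. Dividing by the positive $|c\cap G_\lambda|$ gives $N^{12}_\omega=N^{21}_\omega$, and hence $e_{\omega'}e_{\omega''}=e_{\omega''}e_{\omega'}$. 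The main bookkeeping step I expect is the $G$-invariance of the coefficient across the fiber $\{\lambda\}\times(c\cap G_\lambda)$; this reduces directly to the Lemma that identifies this fiber as a subset of the single $G$-orbit $\omega$.
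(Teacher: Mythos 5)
Your proof is correct and follows essentially the same route as the paper: well-definedness of the product via the same two finiteness observations (finitely many $\mu\preceq\lambda$ together with finiteness of $G_\lambda$, and finiteness of $l'\wedge l''$ for the number of classes), associativity inherited from the semigroup of partial elements, and commutativity resting on $G$-invariance of the class sums. Your swap-and-average bijection, using $h''h'=(h')^{-1}(h'h'')h'$ and constancy of coefficients on the fiber $\{\lambda\}\times(c\cap G_\lambda)$, is just an explicit and valid completion of the commutativity step that the paper declares evident.
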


\begin{proof} Although elements $e_{(l,c)}$ may be infinite sums, their products are defined correctly.
Indeed, for a fixed partial element $(\lambda,h)$ there are only finitely many pairs of partial elements  $(\lambda',h')\in (l',c')$, $(\lambda'',h'')\in (l'',c'')$ such that $(\lambda'\wedge\lambda'', h'h'')= (\lambda,h)$  because there are finitely many $\tilde{\lambda}\in\Lambda$ such that
$\tilde{\lambda}\preceq\lambda$, and group $G_\lambda$ is finite.

Let the product $e_{(l',c')} e_{(l'',c'')}$ is equal to $\sum_{(\lambda,h)}\gamma_{(\lambda,h)}e_{(\lambda,h)}$. Clearly,
coefficients $\gamma_{(\lambda,h)}$ are equal for conjugated partial elements and only finitely many conjugated classes of partial elements appear in the product according to definition \ref{def.admissible}.

The associativity of $A(\Lambda,G,\eta)$ follows from associativity of the multiplication of partial elements. The commutativity of  $A(\Lambda,G,\eta)$ is evident because elements $e_\omega$ are invariant under $G$-action by conjugation of partial elements.
\end{proof}

We call algebra $A=A(\Lambda,G,\eta)$ \textit{an algebra of conjugacy classes of partial elements} or \textit{a generalized IK-algebra} ('IK' is for Ivanov, Kerov, see \cite{IK}). Actually, $A$ is the center of semigroup algebra of the semigroup of partial elements.

Algebra $A$ has natural basis $\{e_\omega | \omega\in \Omega\}$. Denote structure constants of $A$ in this basis by
$P_{\omega',\omega''}^\omega$.

\begin{lemma} \label{l.struct_const} Let $(\lambda,h)$ be an element of a conjugacy class of partial elements $\omega = (l,c)$. Then
\begin{equation*}
\begin{split}
&P_{\omega',\omega''}^{\omega}=\\
&=|\{ (\lambda',h'),(\lambda'',h'')| (\lambda',h')\in\omega',
(\lambda'',h'')\in\omega'', \lambda'\wedge\lambda'' = \lambda, h'h''=h\}|
\end{split}
\end{equation*}
\end{lemma}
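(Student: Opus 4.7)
The plan is to compute the product $e_{\omega'} e_{\omega''}$ directly from the definition and read off the coefficient at a chosen representative partial element $(\lambda,h)$ of $\omega$. By the definition of $e_{\omega'}$ and $e_{\omega''}$ as formal sums over the conjugacy classes,
\begin{equation*}
e_{\omega'} e_{\omega''} \;=\; \sum_{(\lambda',h')\in\omega'}\;\sum_{(\lambda'',h'')\in\omega''} (\lambda'\wedge\lambda'',\,h'h''),
\end{equation*}
so the coefficient with which a particular partial element $(\lambda,h)$ appears on the right-hand side is exactly
\begin{equation*}
N(\lambda,h) \;=\; |\{((\lambda',h'),(\lambda'',h'')) \mid (\lambda',h')\in\omega',\ (\lambda'',h'')\in\omega'',\ \lambda'\wedge\lambda''=\lambda,\ h'h''=h\}|.
\end{equation*}
The previous theorem guarantees that this rearrangement of the formal sum is legitimate: for each fixed $(\lambda,h)$ only finitely many pairs contribute, because $\tilde{\lambda}\preceq\lambda$ only for finitely many $\tilde\lambda$ and $G_\lambda$ is finite.

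The next step is to check that $N(\lambda,h)$ depends only on the orbit $\omega=(l,c)$ and not on the representative chosen. This is immediate from the $G$-equivariance of both conditions defining the set: for any $g\in G$, the map $((\lambda',h'),(\lambda'',h''))\mapsto ((g\lambda',gh'g^{-1}),(g\lambda'',gh''g^{-1}))$ is a bijection from the set counted at $(\lambda,h)$ onto the set counted at $(g\lambda,ghg^{-1})$, because $g(\lambda'\wedge\lambda'')=g\lambda'\wedge g\lambda''$ (since $G$ acts on $\Lambda$ by automorphisms of posets and hence of the semigroup $(\Lambda,\wedge)$) and $g(h'h'')g^{-1}=(gh'g^{-1})(gh''g^{-1})$. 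The sets $\omega',\omega''$ are themselves $G$-invariant, so the bijection is well-defined.

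Combining these two facts, the product $e_{\omega'}e_{\omega''}$ can be grouped by conjugacy classes of partial elements, giving
\begin{equation*}
e_{\omega'} e_{\omega''} \;=\; \sum_{\omega\in\Omega} N(\lambda_\omega,h_\omega)\, e_\omega,
\end{equation*}
where $(\lambda_\omega,h_\omega)$ is any representative of $\omega$. Comparing with the defining relation $e_{\omega'} e_{\omega''} = \sum_\omega P_{\omega',\omega''}^\omega e_\omega$ yields $P_{\omega',\omega''}^\omega = N(\lambda,h)$ for any representative $(\lambda,h)$ of $\omega$, which is the stated formula.

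There is no real obstacle here: the content of the lemma is little more than unwinding the definitions, and the only point requiring mild care is the $G$-invariance of the counting function $N(\lambda,h)$, which reduces to the fact that the action of $G$ is compatible with both the semigroup operation $\wedge$ on $\Lambda$ and multiplication in $G$.
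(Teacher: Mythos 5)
Your proof is correct and takes the same route the paper intends: the paper's proof is simply ``follows directly from the definitions,'' and your argument is exactly that unwinding made explicit -- expanding $e_{\omega'}e_{\omega''}$, reading off the coefficient of a representative $(\lambda,h)$, and noting $G$-equivariance (plus the finiteness already established for the product) to see the count is constant on the conjugacy class.
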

\begin{proof} The proof follows directly from the definitions. \end{proof}

%\vspace{2ex}

%%%%%%%%%%%%%%%%%%%%%%%%%%%%%%%%%%%%%%%%%%%%%%%%%%

\section{Inverse (projective) limit of subalgebras of conjugacy classes}
Let $\eta(\Lambda)$ be an admissible family of subgroups of a group $G$ and $A=A(\Lambda,G,\eta)$ be the algebra of  conjugacy classes of partial elements. Fix an element $\lambda\in\Lambda$.
 Denote by $\Lambda_{\preceq\lambda}$ the poset of all elements preceding $\lambda$, and by $\eta_\lambda$ the restriction of $\eta$ onto $\Lambda_{\preceq\lambda}$. Clearly $\eta_\lambda(\Lambda_{\preceq\lambda})$ is an admissible family of subgroups of $G_\lambda$. The algebra of conjugacy classes of  partial elements $A_{\preceq\lambda}=A(\Lambda_{\preceq\lambda}, G_\lambda,\eta_\lambda)$ is finite-dimensional because
 $|\Lambda_{\preceq\lambda}|<\infty$ and group $G_\lambda$ is finite.

Conjugacy classes of partial elements of $E(\Lambda_{\preceq\lambda}, G_\lambda,\eta_\lambda)$ are equal to intersections of conjugacy classes of partial elements in $E(\Lambda, G,\eta)$ with $E(\Lambda_{\preceq\lambda}, G_\lambda,\eta_\lambda)$ (it follows from the definition \ref{def.admissible}). We denote these intersections by $(l,c)_\lambda$ where $l\in L=\Lambda/G$ and $c$ is a conjugacy class in $G$. If conjugacy class $(l,c)$ does not intersects with $(\Lambda_{\preceq\lambda}, G_\lambda)$ then we put $(l,c)_\lambda=\emptyset$.

According to lemma \ref{l.struct_const}, if orbits $\omega,\omega',\omega''$ intersect with $G_\lambda$ then   the structure constant $P_{\omega',\omega''}^{\omega}$ coincides with structure constant
$P_{\omega',\omega''}^{\omega}(\lambda)$ of algebra $A_{\preceq\lambda}$.

For any pair of element  $\lambda' \preceq \lambda''\in \Lambda$  define a linear map
$\pi_{\lambda',\lambda''}:A_{\preceq\lambda''}\to A_{\preceq\lambda'}$ by the formula
$$
\pi_{\lambda',\lambda''}(e_{(l,c)_{\lambda''}}) = (e_{(l,c)_{\lambda'})})
$$
As above, we assume that $e_{(l,c)_{\lambda''}}=0$ if $(l,c)_{\lambda''}=\emptyset$.

\begin{lemma} \label{t3.1} Linear map $\pi_{\lambda',\lambda''}$ is an epimorphism of algebras.
\end{lemma}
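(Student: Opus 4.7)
The plan is to verify that $\pi_{\lambda',\lambda''}$ is both surjective and multiplicative; linearity is built into the definition on basis vectors. The key inputs will be Lemma~\ref{l.struct_const} and the observation made just above it: whenever all three orbits $\omega,\omega',\omega''$ intersect $E(\Lambda_{\preceq\lambda},G_\lambda,\eta_\lambda)$, the local structure constant $P^{\omega}_{\omega',\omega''}(\lambda)$ agrees with the global constant $P^{\omega}_{\omega',\omega''}$ in $A$.

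Surjectivity will be immediate: any non-zero basis vector $e_{(l,c)_{\lambda'}}$ of $A_{\preceq\lambda'}$ corresponds to a non-empty intersection $(l,c)_{\lambda'}$, so some partial element $(\mu,h)$ satisfies $\mu\preceq\lambda'$ and $h\in G_\mu$. Since $\mu\preceq\lambda'\preceq\lambda''$ and $\eta$ is a morphism of posets, $G_\mu\subseteq G_{\lambda''}$, so the same $(\mu,h)$ also lies in $(l,c)_{\lambda''}$, whence $\pi_{\lambda',\lambda''}(e_{(l,c)_{\lambda''}})=e_{(l,c)_{\lambda'}}$.

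For multiplicativity, for each $\omega$ with $\omega_{\lambda'}\neq\emptyset$ I will compare the coefficient of $e_{\omega_{\lambda'}}$ on the two sides of
\[
\pi_{\lambda',\lambda''}(e_{\omega'_{\lambda''}}\cdot e_{\omega''_{\lambda''}})=\pi_{\lambda',\lambda''}(e_{\omega'_{\lambda''}})\cdot\pi_{\lambda',\lambda''}(e_{\omega''_{\lambda''}}).
\]
The left coefficient is $P^{\omega}_{\omega',\omega''}(\lambda'')$; the right coefficient equals $P^{\omega}_{\omega',\omega''}(\lambda')$ when both $\omega'_{\lambda'}$ and $\omega''_{\lambda'}$ are non-empty, and is $0$ otherwise. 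When all of $\omega,\omega',\omega''$ meet $A_{\preceq\lambda'}$ (and hence $A_{\preceq\lambda''}$), both sides collapse to the global $P^{\omega}_{\omega',\omega''}$ by the observation above, so they coincide.

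The main obstacle, and the case that requires a genuine argument, is when $\omega_{\lambda'}\neq\emptyset$ but at least one of $\omega'_{\lambda'},\omega''_{\lambda'}$ is empty, so the right-hand coefficient vanishes; I must then show that $P^{\omega}_{\omega',\omega''}(\lambda'')=0$ as well. Fix any $(\mu,h)\in\omega\cap A_{\preceq\lambda'}$; by Lemma~\ref{l.struct_const}, this constant counts pairs $(\mu_1,h_1)\in\omega'$, $(\mu_2,h_2)\in\omega''$ lying in $A_{\preceq\lambda''}$ with $\mu_1\wedge\mu_2=\mu$ and $h_1h_2=h$. Since $\wedge$ is the supremum, $\mu_i\preceq\mu_1\wedge\mu_2=\mu\preceq\lambda'$ and $h_i\in G_{\mu_i}\subseteq G_{\lambda'}$, so any such pair would already lie in $A_{\preceq\lambda'}$, forcing both $\omega'_{\lambda'}$ and $\omega''_{\lambda'}$ to be non-empty and contradicting the assumption. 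Hence no such pair exists, the coefficient vanishes, and multiplicativity follows.
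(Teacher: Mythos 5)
Your proof is correct and follows essentially the same route as the paper: both rest on Lemma~\ref{l.struct_const}, which gives equality of the local and global structure constants whenever all three classes meet the smaller subalgebra (since any pair contributing to $P^{\omega}_{\omega',\omega''}$ satisfies $\mu_i\preceq\mu_1\wedge\mu_2$ and so already lies below $\lambda'$). Your explicit treatment of the case where $\omega'_{\lambda'}$ or $\omega''_{\lambda'}$ is empty is just a spelled-out version of the paper's remark that the kernel of $\pi_{\lambda',\lambda''}$ is an ideal, so the argument matches the paper's, only in more detail.
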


\begin{proof} Lemma \ref{l.struct_const}  provides the equality of structure constants for those elements of the basis of $A_{\preceq\lambda''}$ that are mapped not to $0$. Clearly, the kernel of $\pi_{\lambda',\lambda''}$ is an ideal.
\end{proof}

Algebras $A_{\preceq\lambda}$ and epimorphisms $\pi_{\lambda',\lambda''}$ form projective system of associative commutative finite-dimensional algebras with respect to poset $\Lambda$.

\begin{theorem} \label{t3.1} Inverse limit $ \underleftarrow{\lim} A_\lambda$ is isomorphic to the algebra
$A=A(\Lambda,G,\eta)$ of conjugacy classes of partial elements.
\end{theorem}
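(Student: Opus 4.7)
The plan is to build an isomorphism $\Phi \colon A \to \underleftarrow{\lim}\, A_{\preceq\lambda}$ from the universal property of the inverse limit, and then verify injectivity and surjectivity by tracking coefficients on the natural bases. First, for each $\lambda \in \Lambda$ I would define a linear map $\pi_\lambda \colon A \to A_{\preceq\lambda}$ on basis elements by $\pi_\lambda(e_{(l,c)}) = e_{(l,c)_\lambda}$, with the convention that the right-hand side is $0$ when $(l,c)_\lambda = \emptyset$, and extend linearly. This is well-defined because each element of $A$ is a finite combination of basis vectors. By Lemma \ref{l.struct_const}, the structure constants $P^\omega_{\omega',\omega''}$ of $A$ agree with the corresponding structure constants of $A_{\preceq\lambda}$ on every triple of classes that meet $G_\lambda$; when the target class misses $G_\lambda$ both sides are killed by $\pi_\lambda$, and when a source class misses $G_\lambda$ the input already vanishes. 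Hence each $\pi_\lambda$ is an algebra homomorphism, the compatibilities $\pi_{\lambda',\lambda''}\circ\pi_{\lambda''} = \pi_{\lambda'}$ for $\lambda' \preceq \lambda''$ are immediate from the definitions, and the universal property of the inverse limit yields the induced algebra homomorphism $\Phi$.

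Injectivity reduces to a finite argument. Given $a = \sum_{\omega \in F}\alpha_\omega e_\omega$ with $F \subset \Omega$ finite and $\Phi(a) = 0$, pick representatives $\lambda_\omega \in l$ for each $\omega = (l,c) \in F$ and set $\mu = {}^\wedge\{\lambda_\omega : \omega \in F\}$, which exists because $\Lambda$ is admissible. Then $(l,c)_\mu$ is non-empty for every $\omega \in F$ and the vectors $\{e_{(l,c)_\mu}\}_{\omega \in F}$ are linearly independent in $A_{\preceq\mu}$, so $\pi_\mu(a) = 0$ forces $\alpha_\omega = 0$ for every $\omega \in F$.

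For surjectivity I would go in reverse: a compatible family $(a_\mu)_\mu$ determines a coefficient $\alpha_\omega$ for each $\omega = (l,c)$ by reading off the coefficient of $e_{(l,c)_\mu}$ in $a_\mu$ for any $\mu$ with $(l,c)_\mu \neq \emptyset$, the choice being irrelevant because of the compatibility relations. Setting $a = \sum_\omega \alpha_\omega e_\omega$ and checking $\Phi(a) = (a_\mu)$ basis-by-basis in each $A_{\preceq\mu}$ is then formal from the definitions of $\pi_\mu$ and $\alpha_\omega$. The main obstacle I anticipate is ensuring that the resulting coefficient family $(\alpha_\omega)$ has the correct finiteness properties so that $a$ is a legitimate element of $A$; this is where the admissibility axioms of Definition \ref{def.admissible}, together with the finiteness of each individual $a_\mu$, must be used in combination, and it is the step that carries all of the real content of the theorem.
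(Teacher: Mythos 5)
Your construction of the maps $\pi_\lambda(e_{(l,c)})=e_{(l,c)_\lambda}$, the check of multiplicativity via Lemma \ref{l.struct_const}, the compatibilities $\pi_{\lambda',\lambda''}\circ\pi_{\lambda''}=\pi_{\lambda'}$, and the injectivity argument via a common upper bound $\mu={}^\wedge\{\lambda_\omega\}$ are all fine and agree with the first half of the paper's (very terse) proof. The genuine gap is the surjectivity step, which you defer to a hoped-for finiteness property of the coefficient family $(\alpha_\omega)$. No such property holds, and this step cannot be completed: when $L$ is infinite, the set-theoretic inverse limit of compatible families is strictly larger than $A$. Concretely, take $c=\{e\}$ and the family $a_\mu=\sum_{(l,c)_\mu\neq\emptyset} e_{(l,c)_\mu}$; each $a_\mu$ is a finite sum because only finitely many classes $(l,c)$ meet $E(\Lambda_{\preceq\mu},G_\mu)$, and the family is compatible because $(l,c)_{\lambda'}\neq\emptyset$ implies $(l,c)_{\lambda''}\neq\emptyset$ for $\lambda'\preceq\lambda''$; yet reading off coefficients as you propose gives $\alpha_{(l,e)}=1$ for every $l$, so this family is not in the image of any finite combination from $A$. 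In fact the full inverse limit of the system is exactly the completion $\bar A$ of Section 7 (hence, by the last theorem there, $\prod_{l\in L} Z(k[G_l])$), so your $\Phi$ is injective with dense image but never surjective for infinite $L$; the ``step that carries all of the real content'' is false as you have set it up.

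The discrepancy is one of interpretation: the paper does not prove (and does not mean) that $A$ satisfies the universal property of the set-theoretic limit. Its proof only defines the compatible epimorphisms $\pi_\lambda:A\to A_{\preceq\lambda}$ and asserts that $A$ is \emph{minimal} among algebras equipped with such a compatible system of morphisms, i.e.\ ``inverse limit'' is used in a restricted sense (as in Ivanov--Kerov, where the limit is taken in a category of filtered algebras, keeping only families of bounded degree/support), not as the set of all compatible families. To make your route work you must either adopt that restricted notion --- characterize the image of $\Phi$ as those compatible families $(a_\mu)$ whose supporting classes $(l,c)$ form a finite set (equivalently, involve only boundedly many $l$), which is straightforward from your coefficient-reading argument --- or else change the statement and prove that the unrestricted inverse limit is $\bar A$ rather than $A$.
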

\begin{proof} Define epimorphisms $\pi_{\lambda}: A\to A_{\preceq\lambda}$ by the same formula as for $\pi_{\lambda',\lambda''}$.
Clearly, $\pi_{\lambda',\lambda}\circ\pi_{\lambda} = \pi_{\lambda'}$. The minimality of $A$ among algebras with the same morphisms is  evident.
\end{proof}

If $\lambda',\lambda''\in\Lambda$ belong to the same $G$-orbit, then there is canonical isomorphisms between algebras $A_{\preceq\lambda'}$ and $A_{\preceq\lambda''}$, defined on the bases by formula: $e_{(l,c)_{\lambda'}}\to e_{(l,c)_{\lambda''}}$. These isomorphisms allows to identify all algebras $A_{\preceq\lambda}$ with $\lambda$ from the same $G$-orbit $l\in\Lambda/G$.
We denote a representative of these class of canonically isomorphic algebras by $A_{\preceq l}$.

%%%%%%%%%%%%%%%%%%%%%%%%%%%%%%%%%%%%%%%%%%%%%%%%

\section{Relations between structure constants of generalized IK-algebras and centers of group algebras}

By definition \ref{def.admissible}, the intersection of a  conjugacy $c$ of the group $G$ with a subgroup $G_\lambda$ is either empty or
 coincides with a conjugacy class of $G_\lambda$. In the latter case we denote this intersection by $c(\lambda)$.

Sums $e_{c(\lambda)}$  of elements of the nonempty intersection of a conjugacy class of $G$ with $G_\lambda$ form a basis of the center $Z(k[G_\lambda])$ of the group algebra of $G_\lambda$. Structure constants of $Z(k[G_\lambda])$ we denote by
$S_{c'(\lambda),c''(\lambda)}^{c(\lambda)}$. Put  $S_{c'(\lambda),c''(\lambda)}^{c(\lambda)}=0$ if any of conjugacy classes $c, c', c''$ does not intersect with $G_\lambda$.

All algebras $Z(k[G_\lambda])$ with $\lambda$ from the same $G$-orbit $l$ are canonically isomorphic. We denote any of them
by $Z(k[G_l])$. Also we denote structure constants of $Z(k[G_l])$ by $S_{c'(l),c''(l)}^{c(l)}$

\begin{lemma} \label{l.P=S} Let $l$ be an orbit from $L=\Lambda/G$. Then a structure constant $P_{(l,c'),(l,c'')}^{(l,c)}$ of IK-algebra $A$ (resp., $A_{\preceq l}$) is equal to the structure constant $S_{c'(l),c''(l)}^{c(l)}$ of the center $Z(k[G_\lambda])$ of group algebra of the group $G_\lambda$.
\end{lemma}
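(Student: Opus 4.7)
The plan is to apply Lemma \ref{l.struct_const} directly and reduce the counting problem to an ordinary product in the group algebra of $G_\lambda$. Fix a representative $(\lambda,h)$ of the class $(l,c)$, so $\lambda \in l$ and $h \in c \cap G_\lambda = c(\lambda)$. By Lemma \ref{l.struct_const}, $P_{(l,c'),(l,c'')}^{(l,c)}$ equals the number of pairs $\bigl((\lambda',h'),(\lambda'',h'')\bigr)$ of partial elements with $\lambda',\lambda'' \in l$, $h'\in c'$, $h''\in c''$, $\lambda'\wedge\lambda''=\lambda$, and $h'h''=h$.

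The crux of the argument is to show that the supremum constraint forces $\lambda'=\lambda''=\lambda$. Since $\lambda'\wedge\lambda'' = \lambda$, both $\lambda'$ and $\lambda''$ precede $\lambda$. Now $\lambda'$ lies in the $G$-orbit of $\lambda$, say $\lambda' = g\lambda$, so the down-set $\{\mu : \mu \preceq \lambda'\} = g\{\mu : \mu \preceq \lambda\}$ has the same (finite) cardinality as $\{\mu : \mu \preceq \lambda\}$. But $\lambda' \preceq \lambda$ makes the first set a subset of the second, so the two sets coincide; in particular $\lambda \preceq \lambda'$, giving $\lambda'=\lambda$. The same reasoning gives $\lambda'' = \lambda$.

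With $\lambda'=\lambda''=\lambda$ in place, the conditions $h' \in c'$ and $h' \in G_\lambda$ become simply $h' \in c \cap G_\lambda = c'(\lambda)$, and similarly $h'' \in c''(\lambda)$. Hence the count reduces to the number of pairs $(h',h'') \in c'(\lambda) \times c''(\lambda)$ with $h'h''=h$. By definition of the structure constants of $Z(k[G_\lambda])$, this is exactly $S_{c'(\lambda),c''(\lambda)}^{c(\lambda)}$, which coincides with $S_{c'(l),c''(l)}^{c(l)}$ under the canonical identification introduced before the lemma. For the finite-dimensional version $A_{\preceq l}$ the same computation applies verbatim, since by the remark following Lemma \ref{t3.1} the structure constants of $A_{\preceq\lambda}$ agree with those of $A$ whenever the relevant orbits meet $G_\lambda$.

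The only subtle point is the rigidity step $\lambda'\preceq\lambda$, $\lambda'\in G\lambda \Rightarrow \lambda'=\lambda$; everything else is a bookkeeping exercise. I would make sure this cardinality argument is spelled out, since it is the one place where the finiteness of down-sets (the second axiom of an admissible poset) enters decisively.
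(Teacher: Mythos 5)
Your proof is correct and follows the same route as the paper, which simply derives the lemma from Lemma \ref{l.struct_const}; your contribution is to spell out the details the paper leaves implicit, in particular the rigidity step that $\lambda'\in G\lambda$ and $\lambda'\preceq\lambda$ force $\lambda'=\lambda$ (via the finiteness of down-sets), after which the count is exactly the class-sum structure constant of $Z(k[G_\lambda])$. (Only a typo: $c\cap G_\lambda$ should read $c'\cap G_\lambda$ in the reduction step.)
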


\begin{proof} Lemma follows from the lemma \ref{l.struct_const} \end{proof}

Fix $\lambda \in \Lambda$.
% Denote the orbit of $\lambda$ by $l$.
 Let $A_{\preceq\lambda}$ be a generalized
 IK-algebra associated with the admissible set of data $(\Lambda_{\preceq\lambda},G_\lambda,\eta_\lambda)$. Then
by lemma \ref{l.P=S}, the linear subspace $A'$ generated by elements $\{e_{(\lambda,c)_\lambda}\}$, where $c$ run over conjugacy classes of $G$ intersecting with $G_\lambda$, is a subalgebra isomorphic to the center $Z(k[G_\lambda])$.

     For an element $h\in G_\lambda$ and $G$-orbit $l'$ in $\Lambda$ denote by $\xi( l',h; \lambda)$ the number of partial elements $(\lambda',h)$ such that $\lambda'\in l'$ and $\lambda'\preceq\lambda$.
    Clearly,
$\xi(l',h'; \lambda')=\xi(l',h; \lambda)$ if $h'$ is conjugated to $h$, $h'\in G_{\lambda'}$  and $\lambda'$ belongs to the orbit of $\lambda$. Thus, we may put $\xi(l',c;l)=\xi(l',h'; \lambda')$ where  $h'\in c$ and $\lambda\in l$.

\begin{lemma} [Main] \label{main_lemma} Let $l$, $l''$, $l''$ be three $G$-orbits in $\Lambda$ and $c$, $c'$, $c''$ be three conjugacy classes of the group $G$. Then
$$\xi(l',c';l)\xi(l'',c'';l) \, S_{c'(l),c''(l)}^{c(l)} =
\sum_{\tilde{l}}\xi( \tilde{l},c;  l)   P_{(l',c'),(l'',c'')}^{(\tilde{l},c)}
$$
\end{lemma}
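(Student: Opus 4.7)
The plan is a double-counting argument: I would show that both sides of the identity equal $|X|$ for a certain set $X$ of $4$-tuples $(\lambda',h',\lambda'',h'')$. Fix once and for all a representative $\lambda\in l$ and an element $h\in c(\lambda)=c\cap G_\lambda$; if this intersection is empty then both sides are zero and the identity is trivial.

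First I would unwind the left-hand side. By the definition of the structure constants of $Z(k[G_\lambda])$, the factor $S_{c'(l),c''(l)}^{c(l)}$ is the number of pairs $(h',h'')\in c'(\lambda)\times c''(\lambda)$ with $h'h''=h$. For any such $h'\in c'\cap G_\lambda$, the quantity $\xi(l',c';l)$ counts $\lambda'\in l'$ with $\lambda'\preceq\lambda$ and $h'\in G_{\lambda'}$; this is independent of the chosen $h'$ by the conjugation-invariance noted just before the lemma, and similarly for $\xi(l'',c'';l)$. Hence the left-hand side equals the cardinality of
$$X=\{(\lambda',h',\lambda'',h''):\lambda'\in l',\ \lambda'\preceq\lambda,\ h'\in c'\cap G_{\lambda'},\ \lambda''\in l'',\ \lambda''\preceq\lambda,\ h''\in c''\cap G_{\lambda''},\ h'h''=h\}.$$

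Next I would unwind the right-hand side. By Lemma~\ref{l.struct_const}, for any partial element $(\tilde\lambda,h)\in(\tilde l,c)$ the constant $P_{(l',c'),(l'',c'')}^{(\tilde l,c)}$ equals the number of pairs $((\lambda',h'),(\lambda'',h''))$ of partial elements in $(l',c')\times(l'',c'')$ with $\lambda'\wedge\lambda''=\tilde\lambda$ and $h'h''=h$, while $\xi(\tilde l,c;l)$ counts the $\tilde\lambda\in\tilde l$ with $\tilde\lambda\preceq\lambda$ and $h\in G_{\tilde\lambda}$. Since a $4$-tuple $(\lambda',h',\lambda'',h'')$ determines $\tilde\lambda=\lambda'\wedge\lambda''$ and hence its orbit $\tilde l$, summing over $\tilde l$ gives the number of $4$-tuples with $\lambda'\in l'$, $h'\in c'\cap G_{\lambda'}$, $\lambda''\in l''$, $h''\in c''\cap G_{\lambda''}$, $h'h''=h$, subject to the additional conditions $\lambda'\wedge\lambda''\preceq\lambda$ and $h\in G_{\lambda'\wedge\lambda''}$.

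To conclude, I would check that these two sets of side conditions are equivalent. The equivalence $\lambda'\wedge\lambda''\preceq\lambda$ iff $\lambda'\preceq\lambda$ and $\lambda''\preceq\lambda$ is immediate from the universal property of the supremum, and the condition $h\in G_{\lambda'\wedge\lambda''}$ is automatic from $h=h'h''$ together with $G_{\lambda'},G_{\lambda''}\subseteq G_{\lambda'\wedge\lambda''}$. Thus both sides count $|X|$. The main obstacle is purely bookkeeping: one must be scrupulous about which subgroup containments are automatic, verify that each $4$-tuple contributes to exactly one orbit $\tilde l$ (which is the case since $\tilde\lambda=\lambda'\wedge\lambda''$ is uniquely determined by the tuple), and confirm that the well-definedness of $\xi(l',c';l)$ really allows its value to be pulled outside the sum over $(h',h'')$ in the analysis of the left-hand side.
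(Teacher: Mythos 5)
Your proposal is correct and takes essentially the same route as the paper: fix $\lambda\in l$ and $h\in c(\lambda)$, then double-count the set of pairs of partial elements from $(l',c')\times(l'',c'')$ whose product is $(\tilde\lambda,h)$ with $\tilde\lambda\preceq\lambda$ --- once via the factorizations $h=h'h''$ inside $G_\lambda$ (giving the left side) and once by grouping according to $\tilde\lambda=\lambda'\wedge\lambda''$ (giving the right side). Your write-up merely makes explicit the bookkeeping the paper leaves implicit (the $\preceq\lambda$ conditions, the containment $G_{\lambda'},G_{\lambda''}\subseteq G_{\lambda'\wedge\lambda''}$, and the conjugation-invariance of $\xi$), which is fine.
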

\begin{proof} Fix $\lambda\in l$ and $h\in c(\lambda)$. Denote be $M$ the set of paires $( (\lambda',h'),(\lambda'',h''))$ such that
$h'h''=h$, $\lambda'\in l'$, $\lambda''\in l''$. Count the number of elements in $M$ in two ways.

First, count the number of
$\{h',h'' | h'h''=h\}$ and multiply it by two numbers:
$|\{\lambda' |\lambda'\in l', \lambda'\le\lambda,  G_{\lambda'}\ni h'\}|$
and  similar for $\lambda''$. We obtain left side of the identity.

     Second, group pairs of partial elements $(\lambda',h')$, $(\lambda'',h'')$ by their products $(\tilde{\lambda},h)$. Thus, in one group fall all pairs $(\lambda',h')$, $(\lambda'',h'')$ such that $\lambda'\wedge\lambda''=\tilde{\lambda}$.
     Multiply the number elements in each group $(\tilde{\lambda},h)$ by
the number of $\{\tilde{\lambda} | G_{\tilde{\lambda}}\ni h\}$.
     We obtain right side of the identity.
\end{proof}
     Lemma \ref{main_lemma} provides explicit expression of structure constants of the centers of group algebras of $G_\lambda$ via structure constants of the algebra of conjugacy classes of partial elements.
    This formula can be converted. Below we provide expression for $P_{\omega',\omega''}^\omega$ via $S_{c'(l),c''(l)}^{c(l)}$ in the special case:
poset $L=\Lambda/G$ is ordered set.
     Thus, $L$ may be identified with the set $\{0\}\cup\mathbb{N}$.

      In all examples from section \ref{examples} $L$ is ordered set.

     Fix conjugacy classes of partial elements  $\omega'=(l',c')$, $\omega''=(l'',c'')$ and conjugacy class $c$. To simplify formulas, below we use the following denotations:
$p^l=P_{\omega',\omega''}^{\omega}$ , where $\omega=(l,c)$ for an arbitrary $l\in L$;
$\xi'(l)=\xi(l',c';l)$, $\xi''(l)=\xi(l'',c'';l)$,
$s(l) =S_{c'(l),c''(l)}^{c(l)}$,
$\xi(\tilde{l},l)=\xi(\tilde{l},c;l)$.

Suppose, $l'\wedge l'' = \{m,m+1,\dots, M\}$. Therefore, $l$ run over the set $\{m,m+1,\dots, M\}$ and $\tilde{l}$
belong to the set $\{m,m+1,\dots, l\}$ because $\tilde{l}\preceq l$.

Define a vector $\overrightarrow{P}$ which components are structure constants of the algebra $A$:
$$
\overrightarrow{P} =  \left(
                      \begin{array}{c}
                                   p^m\\
                                   p^{m+1}\\
                                   p^{m+2}\\
                                  \dots\\
                                  p^M\\
                    \end{array}
                       \right)
$$
Define a matrix $R$  which coefficients are $\xi(\tilde{l},l)$:
$$
R= \left|  \begin{array}{ccccc}
                                0&                                   0&  0&  \dots&      0\\
                                \xi(m,m+1)&                    0&  0&  \dots&     0\\
                                \xi(m,m+2)&\xi(m+1,m+2)&  0&   \dots&     0\\
                                \dots&                       \dots&  \dots&   \dots&\dots\\
                                 \xi(m,M)&       \xi(m+1,M)& \xi(m+2,M)& \dots&        0\\
                \end{array}
      \right|
$$
Define a vector $ \overrightarrow{S}$ which components are structure constants of algebras $Z(k[G_l])$ multiplied by coefficients:
$$ \overrightarrow{S} =  \left(
                      \begin{array}{c}
                            \xi'(m)\xi''(m)s(m)\\
                            \xi'(m+1)\xi''(m+1)s(m+1)\\
                            \dots\\
                            \xi'(M)\xi''(M)s(M)\\
                    \end{array}
                       \right)
$$

The matrix $R$ is nilpotent, hence $1+R$ is unipotent and thus, invertible.

\begin{lemma}\label{inverse_lemma}
$$
\overrightarrow{P} = (1+R)^{-1} \overrightarrow{S}
$$
\end{lemma}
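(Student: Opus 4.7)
The plan is to show that $(1+R)\overrightarrow{P}=\overrightarrow{S}$, and then to invert $1+R$ using its nilpotence; the first identity will turn out to be nothing more than a coordinatewise rephrasing of the Main Lemma \ref{main_lemma}.

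First I would specialize the Main Lemma to the present notation. For fixed $l\in\{m,m+1,\dots,M\}$, the sum $\sum_{\tilde l}\xi(\tilde l,c;l)P_{(l',c'),(l'',c'')}^{(\tilde l,c)}$ has nonvanishing terms only when $\tilde l\preceq l$ (otherwise $\xi(\tilde l,c;l)=0$) and $\tilde l\in l'\wedge l''=\{m,\dots,M\}$ (otherwise $p^{\tilde l}=0$). Since $L$ is totally ordered this yields
$$\xi'(l)\xi''(l)\,s(l)=\sum_{\tilde l=m}^{l}\xi(\tilde l,l)\,p^{\tilde l}.$$
Then I would isolate the diagonal term $\tilde l=l$. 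Here $\xi(l,l)=1$: among $\tilde\lambda\in l$ with $\tilde\lambda\preceq\lambda$ (where $\lambda\in l$), the only possibility is $\tilde\lambda=\lambda$, because distinct comparable elements in $\Lambda$ lie in distinct positions of the totally ordered quotient $L$ and so cannot share a $G$-orbit. The identity therefore becomes
$$\xi'(l)\xi''(l)\,s(l)=p^l+\sum_{\tilde l=m}^{l-1}\xi(\tilde l,l)\,p^{\tilde l},$$
which is exactly the $l$-th coordinate of $(1+R)\overrightarrow{P}=\overrightarrow{S}$, given the explicit strictly lower-triangular form of $R$.

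Finally, $R$ is a finite strictly lower-triangular matrix, hence nilpotent, so $1+R$ is unipotent with inverse given by the finite series $1-R+R^2-\cdots+(-R)^{M-m}$; left-multiplying by it gives $\overrightarrow{P}=(1+R)^{-1}\overrightarrow{S}$. The one nontrivial point, which I would spell out carefully, is the equality $\xi(l,l)=1$; it relies on the assumption in force that $L=\{0\}\cup\mathbb{N}$, forcing distinct elements of a common $G$-orbit to be $\preceq$-incomparable in $\Lambda$. Everything else is index bookkeeping and the standard inversion of a unipotent matrix.
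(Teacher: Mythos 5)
Your proof is correct and takes essentially the same route as the paper, which disposes of the lemma by saying it is the Main Lemma \ref{main_lemma} ``up to denotations'' together with the already-noted unipotence of $1+R$; your version merely writes out the coordinatewise identity $\xi'(l)\xi''(l)s(l)=p^l+\sum_{\tilde l<l}\xi(\tilde l,l)p^{\tilde l}$ explicitly. One small caveat: your justification of $\xi(l,l)=1$ is slightly circular (total ordering of $L$ alone does not forbid two comparable elements of $\Lambda$ lying in one $G$-orbit), but this normalization is exactly what the paper itself tacitly uses, and it does hold in all the examples considered, where elements of one orbit are subsets of equal cardinality and hence incomparable.
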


\begin{proof} Up to denotations, this lemma is equivalent to the lemma \ref{main_lemma}
\end{proof}

Constants $\xi(l',c;l)$ may be computed directly for admissible families of subgroups from section \ref{s.examples}. Moreover, they were computed in \cite{IK}, see also \cite{MMN2}, for the admissible set of subgroups of $G=S_\infty$.
In the case of the admissible set of subgroups of $F\rwr S_\infty$ they may be computed similarly.

The support of an elements $h$ of a group $G$ is defined in section \ref{s.examples} for
$G=F\rwr S_\infty$. In the case of $G=S_\infty$ define the support $\varsigma$ of $h$ as the set of $i\in\mathbb{N}$ such that $h(i)\ne i$.

Denote by $\varsigma$ the support of an element $h$ of a conjugacy class $c$.  Clearly, the cardinality
$\alpha=|\varsigma|$ is equal for all $h\in c$; we denote it by $\alpha(c)$.

\begin{proposition}
$\xi(l',c;l)= \frac{(l-\alpha(c))!}{(l-l')!(l'-\alpha(c))!}$
\end{proposition}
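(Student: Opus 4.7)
The plan is to reduce the statement to a direct combinatorial count, using the explicit descriptions of the admissible families from Section~\ref{s.examples}.

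First I would unpack the definitions in the setting of the examples. Since $\Lambda$ is the poset of finite subsets of $\mathbb{N}$, the $G$-orbits $L=\Lambda/G$ are labeled by cardinality, so the notation $l',l\in\{0\}\cup\mathbb{N}$ refers to sizes of subsets. Fix a representative $\lambda\in l$, i.e.\ $|\lambda|=l$, and an element $h\in c(\lambda)\subset G_\lambda$ (the case $c\cap G_\lambda=\emptyset$ makes $\xi$ trivially zero after extending conventions, but by the displayed formula we may assume $\alpha(c)\leq l$). By the definition preceding the proposition, $\xi(l',c;l)$ equals the number of $\lambda'\subset\lambda$ with $|\lambda'|=l'$ and $h\in G_{\lambda'}$.

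The key observation is the support characterization of the subgroups $G_{\lambda'}$. For $G=S_\infty$ this is immediate from the definition of $S_{\lambda'}$; for $G=F\rwr S_\infty$ it is the content of the support definition in Section~\ref{s.examples}, which was used in the proof of the second admissibility theorem: $h\in G_{\lambda'}$ if and only if the support $\varsigma$ of $h$ is contained in $\lambda'$. So the count becomes
\[
\xi(l',c;l)=\bigl|\{\lambda'\subset\lambda:|\lambda'|=l',\ \varsigma\subseteq\lambda'\}\bigr|.
\]

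Since $h\in G_\lambda$ we have $\varsigma\subseteq\lambda$, and $|\varsigma|=\alpha(c)$ by the paragraph introducing $\alpha(c)$. Any admissible $\lambda'$ is obtained uniquely by choosing a subset of $\lambda\setminus\varsigma$ of size $l'-\alpha(c)$ and adjoining $\varsigma$. Since $|\lambda\setminus\varsigma|=l-\alpha(c)$, the count is
\[
\binom{l-\alpha(c)}{l'-\alpha(c)}=\frac{(l-\alpha(c))!}{(l-l')!\,(l'-\alpha(c))!},
\]
which is the claimed formula. Finally I would remark that $\xi(l',c;l)$ is well-defined, i.e.\ independent of the chosen representatives $\lambda\in l$ and $h\in c\cap G_\lambda$, as already noted in the paragraph introducing $\xi$; no step of the computation uses anything beyond $|\lambda|$ and $|\varsigma|$, so there is no obstacle here. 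The only subtlety to flag is the uniform treatment of the two families of examples, handled by noting that in both cases the support function converts membership $h\in G_{\lambda'}$ into the set-theoretic inclusion $\varsigma\subseteq\lambda'$.
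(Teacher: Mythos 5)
Your proof is correct and is exactly the direct calculation that the paper's proof (which only says ``by direct calculation'') leaves implicit: the count of subsets $\lambda'\subseteq\lambda$ with $|\lambda'|=l'$ containing the support $\varsigma$ of $h$ gives $\binom{l-\alpha(c)}{\,l'-\alpha(c)}$, and your reduction of $h\in G_{\lambda'}$ to $\varsigma\subseteq\lambda'$ is valid in both example families. Nothing further is needed.
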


\begin{proof}Proof is by direct calculation. \end{proof}
%%%%%%%%%%%%%%%%%%%%%%%%%%%%%%%%%%%%%%%%%%%%%%%%%%
\section{Monomorphism of generalized IK-algebra $A$ into direct sum of centers of group algebras $Z(k[G_l])$}
Let $\eta(\Lambda)\subset G$ be an admissible family of subgroups.

Denote by $\hat A$ the direct product of centers  $Z(k[G_l])$ of group algebras of groups $G_l$, $l\in \Lambda/G$.
Elements of $\hat A$ are (possibly, infinite) sums $\sum _{l\in L} a_l$, $a_l \in Z(k[G_l])$, the product of
elements $a'=\sum _{l\in L} a'_l$ and $a''=\sum _{l\in L} a''_l$ is $a'\circ a'' = \sum _{l\in L} a'_l\underset{l}{\circ} a''_l$ where $\underset{l}{\circ}$ denotes multiplication in corresponding algebra $Z(k[G_l])$.

Denote the sum of elements of a conjugacy class $c(l)\subset G_l$ by $e_{c(l)}$. Elements $e_{c(l)}$ form a basis of algebra $Z(k[G_l])$.

Define linear map $\varphi{:} A{\to} \hat{A}$ by the formula
$\varphi(e_{(l',c)}) = \sum_{l\succeq l'} \xi(l',c;l) e_{c(l)}$.

\begin{theorem} $\varphi$ is a monomorphism of algebras.
\end{theorem}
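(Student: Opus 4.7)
The plan has two parts: verifying that $\varphi$ respects multiplication, and establishing injectivity. Multiplicativity will reduce almost immediately to Lemma \ref{main_lemma}, while injectivity will follow from a triangularity argument along the partial order on $L$.

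\textbf{Multiplicativity.} I would expand both sides and compare coefficients. Using the structure constants of $A$,
\begin{equation*}
\varphi(e_{(l',c')}\cdot e_{(l'',c'')}) = \sum_{(\tilde l,\tilde c)} P^{(\tilde l,\tilde c)}_{(l',c'),(l'',c'')}\sum_{l\succeq \tilde l}\xi(\tilde l,\tilde c;l)\,e_{\tilde c(l)},
\end{equation*}
while, since multiplication in $\hat A$ is componentwise on $Z(k[G_l])$,
\begin{equation*}
\varphi(e_{(l',c')})\circ\varphi(e_{(l'',c'')}) = \sum_l \xi(l',c';l)\xi(l'',c'';l)\sum_c S^{c(l)}_{c'(l),c''(l)}\,e_{c(l)}.
\end{equation*}
Because distinct $G$-conjugacy classes have disjoint intersections with $G_l$ (a common element would force them equal), the vectors $e_{c(l)}$ attached to different $c$ are linearly independent in $Z(k[G_l])$, so I may match the coefficient of each $e_{c(l)}$ separately. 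On the left, only $\tilde c = c$ contributes to $e_{c(l)}$, and the resulting scalar identity
\begin{equation*}
\sum_{\tilde l\preceq l}\xi(\tilde l,c;l)\,P^{(\tilde l,c)}_{(l',c'),(l'',c'')} = \xi(l',c';l)\xi(l'',c'';l)\,S^{c(l)}_{c'(l),c''(l)}
\end{equation*}
is exactly Lemma \ref{main_lemma}.

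\textbf{Injectivity.} Let $x=\sum \alpha_{(l,c)}e_{(l,c)}\in A$ be a nonzero finite combination, and choose a minimal element $l_0$ of the finite set $\{l : \alpha_{(l,c)}\ne 0\text{ for some }c\}\subset L$. Only basis elements with $l\preceq l_0$ can contribute to the $Z(k[G_{l_0}])$-component of $\varphi(x)$, and by minimality only $l=l_0$ does, yielding
\begin{equation*}
\sum_c \alpha_{(l_0,c)}\,\xi(l_0,c;l_0)\,e_{c(l_0)}.
\end{equation*}
I would then verify $\xi(l_0,c;l_0)=1$ whenever $c(l_0)\ne\emptyset$: fixing $\lambda\in l_0$ and $h\in c\cap G_\lambda$, any $\lambda'\in l_0$ with $\lambda'\prec\lambda$ would be of the form $g\lambda$ for some $g\in G$, producing an infinite strictly descending chain $\lambda\succ g\lambda\succ g^2\lambda\succ\cdots$ inside the finite set $\{\mu\preceq\lambda\}$, contradicting the admissibility of $\Lambda$. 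Hence the $l_0$-component of $\varphi(x)$ is a nontrivial linear combination of distinct basis vectors of $Z(k[G_{l_0}])$, so $\varphi(x)\ne 0$.

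The only substantive content is Lemma \ref{main_lemma} itself; everything else is careful bookkeeping. The mild subtleties I expect are, first, justifying the coefficient-matching for multiplicativity via linear independence of the $e_{c(l)}$ across different $G$-classes $c$, and second, the short chain argument giving $\xi(l,c;l)=1$ in the injectivity step, which is where the admissibility axiom on $\Lambda$ enters.
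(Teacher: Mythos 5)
Your proposal is correct. The multiplicativity half is exactly the paper's argument: compute the $l$-th component of $\varphi(e_{(l',c')})\circ\varphi(e_{(l'',c'')})$ and of $\varphi(e_{(l',c')}e_{(l'',c'')})$, match coefficients of the $e_{c(l)}$, and observe that the resulting scalar identity is precisely Lemma \ref{main_lemma}. Where you go beyond the paper is injectivity: the paper's proof stops after the component computation ("the statement follows from the lemma") and never spells out why $\varphi$ has trivial kernel, whereas you supply the triangularity argument --- pick a minimal $l_0$ in the (finite) support of a nonzero element, note that only the $(l_0,c)$-terms can reach the $Z(k[G_{l_0}])$-component, and check $\xi(l_0,c;l_0)=1$ by the observation that $\lambda'\in l_0$ with $\lambda'\prec\lambda$ would generate an infinite strictly descending chain $\lambda\succ g\lambda\succ g^2\lambda\succ\cdots$ inside the finite set $\{\mu\preceq\lambda\}$, which is impossible by admissibility of $\Lambda$ (and is where that axiom genuinely enters). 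This filled-in step is sound and is in fact needed for the theorem as stated, so your write-up is, if anything, more complete than the paper's.
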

\begin{proof}
%Denote multiplication in $Z(k[G_l])$ by $\underset{l}{\circ}$.
First, compute $l^{th}$ component $\lbrack \varphi(e_{(l',c')}){\circ}\varphi(e_{(l'',c'')})\rbrack_l$ of the product $\varphi(e_{(l',c')}){\circ}\varphi(e_{(l'',c'')})$ in the algebra $\hat A$:
\begin{equation*}
\begin{split}
&\lbrack \varphi(e_{(l',c')}){\circ}\varphi(e_{(l'',c'')})\rbrack_l=
\xi(l',c';l)\xi(l'',c'';l) e_{c'(l)}\underset{l}{\circ} e_{c''(l)}=\\
&=\xi(l',c';l)\xi(l'',c'';l)\sum_{c(l)}S_{c'(l),c''(l)}^{c(l)} e_{c(l)}
\end{split}
\end{equation*}
Second, compute $l^{th}$ component of  $\varphi(e_{ (l',c')} e_{(l'',c'')} )$:
 \begin{equation*}
\begin{split}
&\lbrack\varphi(e_{ (l',c')} e_{(l'',c'')} )\rbrack_l=
\lbrack\varphi( \sum_{(\tilde{l},c)} P_{(l',c'),(l'',c'')}^{(\tilde{l},c)}) e_{(\tilde{l},c)})\rbrack_l =\\
& =\sum_{(\tilde{l},c)} P_{(l',c'),(l'',c'')}^{(\tilde{l},c)})\xi(\tilde{l},c;l) e_{c(l)}
= \sum_{c(l)} \sum_{\tilde{l}} P_{(l',c'),(l'',c'')}^{(\tilde{l},c)})\xi(\tilde{l},c;l) e_{c(l)}
\end{split}
\end{equation*}

Thus, the statement follows from the lemma \ref{main_lemma}
\end{proof}

\section{Completion of IK-algebra $A$}

     Let $\eta(\Lambda)\subset G$ be an admissible family of subgroups and $A$ be generalized IK-algebra.
     For an orbit $l \in L=\Lambda/G$ denote by $A_l$ the linear subspace (not a
subalgebra!) of $A$ generated by basic vectors $e_{(l,c)}$ with fixed $l$ and arbitrary conjugacy classes $c$ of group $G$.
     Clearly, $A=\oplus_{l\in L} A_l$.
     Subspaces $A_l$ are finite dimensional because subgroups $G_\lambda$ of class $G_l$ are finite.

     Denote by $\bar A$ a linear space of formal sums $ \sum_{l\in L} a_l$  where $a_l\in A_l$. The product of two elements of $\bar A$,
$a=\sum_{l\in L} a_l$ and $b=\sum_{l\in L} b_l$, is defined correctly. Indeed,
$$
\sum_{l\in L} a_l\sum_{l\in L} b_l=\sum_{l',l''}a_{l'}b_{l''}=\sum_{l',l''}\sum_{l\in l'\wedge l''} c_{l',l'',l}
= \sum_{l}\sum_{l',l'': l'\wedge l''\ni l} c_{l',l'',l}
$$
Here $c_{l',l'',l}$ denotes the projection of the product $a(l')b(l'')$ to the component $A_l$.

Internal sum in the most right expression includes only finitely many summands because $l',l''\preceq l$ and there are only finitely many elements preceding $l$ in the poset $L$.
Thus, $ab= \sum_{l}d_l$  where $d_l=\sum_{l\in l'\wedge l''}c_{l',l'',l}$. Therefore, $\bar A$ is an algebra.

If  $|L|\le\infty$, then evidently, $\bar A = A$. In the case of infinite set $L$ algebra $\bar A$ is a completion of $A$ in the following topology. We call a finite subset $F\subset L$ closed if $l\preceq f\in F$ implies $l\in F$. For a closed finite subset $F\subset  L$ denote a linear subspace $\oplus_{l\notin F}A_l$ of the algebra $A$  by $A_F$.
Call sets $\{A_F| F \subset L, |F|<\infty\}$  a fundamental system of neighborhoods of zero.  Clearly, $\bigcup_{F}A_F=A$ and
$\bigcap_{F}A_F = \{0\}$.  Evidently,  $A$ is dense in $\bar A$.

Define topology on the algebra $\hat A=\prod\limits_{l\in L} Z(k[G_l])$ similarly.
\begin{lemma} Homomorphism $\varphi: A\to \hat A$ is continuous.
\end{lemma}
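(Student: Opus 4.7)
The plan is to verify continuity directly from the definition, by exhibiting for each basic neighborhood of zero in $\hat A$ a basic neighborhood of zero in $A$ whose image is contained in it. Unpacking the topologies: the basic open neighborhoods of zero in $A$ are the subspaces $A_F = \bigoplus_{l \notin F} A_l$ indexed by closed finite subsets $F \subset L$, and the analogous subspaces $\hat A_F = \prod_{l \notin F} Z(k[G_l])$ form the basic neighborhoods of zero in $\hat A$. So continuity of $\varphi$ reduces to the assertion: for every closed finite $F \subset L$ there exists a closed finite $F' \subset L$ such that $\varphi(A_{F'}) \subset \hat A_F$.

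Given a closed finite $F$, I would define
\[
F' = \{\, l' \in L : l' \preceq l \text{ for some } l \in F \,\}.
\]
Since $F$ is finite and each element of $L$ has only finitely many predecessors (noted in section 1 as a consequence of the admissibility of $\Lambda$), $F'$ is a finite union of finite sets and hence finite. Transitivity of $\preceq$ shows $F'$ is downward closed, so $F'$ is closed in the sense of the paper; also $F \subset F'$.

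It then remains to check that $\varphi(A_{F'}) \subset \hat A_F$. Since $A_{F'}$ is the linear span of basis vectors $e_{(l',c)}$ with $l' \notin F'$, it suffices to check the inclusion on these. The defining formula
\[
\varphi(e_{(l',c)}) \;=\; \sum_{l \succeq l'} \xi(l',c;l)\, e_{c(l)}
\]
shows that the $l$-component of $\varphi(e_{(l',c)})$ vanishes unless $l \succeq l'$. For $l' \notin F'$, by construction $l' \not\preceq l$ for every $l \in F$, so every $l$-component with $l \in F$ is zero; that is, $\varphi(e_{(l',c)}) \in \hat A_F$. Linearity then gives $\varphi(A_{F'}) \subset \hat A_F$.

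The argument has no analytic content; the only substantive step is choosing $F'$ to be the downward closure of $F$, and the main obstacle is simply bookkeeping — confirming that this downward closure is both finite (using the finiteness of predecessors in $L$) and closed, and matching the indices of the components of $\varphi(e_{(l',c)})$ against those that define the topology of $\hat A$.
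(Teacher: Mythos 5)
Your proof is correct and takes essentially the same route as the paper: both verify continuity directly by producing, for a given neighborhood $\hat A_F$, a neighborhood of zero in $A$ (you take the downward closure $F'$ of $F$, the paper its saturation denoted $A_{{}^\wedge F}$) whose image lies in $\hat A_F$, using that the $l$-component of $\varphi(e_{(l',c)})$ vanishes unless $l\succeq l'$. Your write-up merely makes explicit the finiteness and closedness of $F'$, which the paper dismisses as evident.
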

\begin{proof}
For any neighborhood $\hat A_F\subset \bar A$  corresponding to a finite subset $F\subset L$  the image of the neighborhood $A_{{}^\wedge F}$ is
evidently contained in $\hat A_F\subset \bar A$. Thus, $\varphi$ is continuous.
\end{proof}

Denote by $\bar \varphi$ the extension of the homomorphism $\varphi:A\to \hat A$
to the algebra $\bar A$ by continuity.

\begin{theorem} Homomorphism $\bar \varphi:\bar A\to \hat A$ is an isomorphism of algebras.
\end{theorem}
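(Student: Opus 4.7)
The plan is to exploit the triangular structure of $\varphi$ relative to the decompositions $A = \bigoplus_{l \in L} A_l$ and $\hat A = \prod_{l \in L} Z(k[G_l])$. From the defining formula $\varphi(e_{(l',c)}) = \sum_{l \succeq l'} \xi(l',c;l)\, e_{c(l)}$ one reads off that $\varphi(A_{l'})$ has zero component in $Z(k[G_l])$ whenever $l \not\succeq l'$, and that its $l'$-component sends $e_{(l',c)}$ to $\xi(l',c;l')\,e_{c(l')}$. Since the choice $\lambda' = \lambda$ always contributes to $\xi(l',c;l')$, this coefficient is positive; consequently the $l'$-component is a linear bijection $\psi_{l'} \colon A_{l'} \to Z(k[G_{l'}])$ matching the natural bases (classes $(l',c)$ with $c\cap G_{l'}\ne\emptyset$ on the left, classes $c(l')$ on the right).

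I would then construct an inverse to $\bar\varphi$ by recursion on the well-founded poset $(L,\prec)$, which is well-founded because every principal down-set $\{l' : l' \preceq l\}$ is finite. Given $y = \sum_l y_l \in \hat A$, define $x_l \in A_l$ inductively by
\[
x_l \;=\; \psi_l^{-1}\!\left( y_l - \sum_{l' \prec l} \bigl(\varphi(x_{l'})\bigr)_l \right),
\]
where the subtracted sum is finite. The formal series $x = \sum_l x_l$ is automatically a legitimate element of $\bar A$, and by the triangular property $(\varphi(x_{l'}))_l = 0$ whenever $l' \not\preceq l$; so the $l$-component of $\bar\varphi(x)$ equals $y_l$ for every $l$, giving surjectivity. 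Injectivity follows by the same recursion: if $\bar\varphi(x) = 0$ and $x_{l'} = 0$ for all $l' \prec l$, then $\psi_l(x_l) = 0$, whence $x_l = 0$.

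Multiplicativity of $\bar\varphi$ is inherited from the previously proved algebra homomorphism property of $\varphi$ by continuity, and the injective-on-$A$ statement already established shows that this inductive $x$ is the unique preimage. The main point that needs care — essentially the only technical burden — is to justify that the continuous extension $\bar\varphi$ acts on a formal series $\sum_l x_l \in \bar A$ by the naive coordinatewise rule $\bigl(\bar\varphi(\sum_l x_l)\bigr)_l = \sum_{l' \preceq l} \varphi(x_{l'})_l$. This follows directly from the preceding continuity lemma: truncating $x$ to the closed finite set ${}^\wedge F$ for a cofinal family of finite closed $F \subseteq L$ gives elements of $A$ whose $\varphi$-images already agree with the candidate $\bar\varphi(x)$ on $F$, and the errors lie in $\hat A_F$ and so tend to $0$ in the product topology on $\hat A$.
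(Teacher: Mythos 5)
Your proposal is correct, and it takes a genuinely different (and in places more complete) route than the paper. The paper's own proof works with the same unitriangular structure of $\varphi$, but organizes it as a density argument: starting from $\varphi(e_{(l,c)})=\sum_{l'\succeq l}\xi(l,c;l')e_{c(l')}$, it inductively subtracts multiples $\beta_{l'}\varphi(e_{(l',c)})$ to cancel the higher components and concludes that each $e_{c(l)}$ lies in the closure of $\varphi(A)$, while injectivity of $\bar\varphi$ is simply inherited from injectivity of $\varphi$. You instead build an explicit two-sided inverse by well-founded recursion on $L$ (legitimate since every down-set $\{l':l'\preceq l\}$ is finite), using the diagonal bijections $\psi_l\colon A_l\to Z(k[G_l])$ and the coordinatewise formula $\bigl(\bar\varphi(\sum_l x_l)\bigr)_l=\sum_{l'\preceq l}\varphi(x_{l'})_l$, which you justify by truncation and continuity. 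This buys two things the paper leaves implicit: genuine surjectivity onto all of $\hat A$ rather than only density of the image, and a direct proof that the continuous extension $\bar\varphi$ (not just $\varphi$) is injective --- a point that does not follow formally from injectivity of $\varphi$ alone. One small refinement: you argue the diagonal coefficient $\xi(l,c;l)$ is positive because $\lambda'=\lambda$ contributes; in fact it equals $1$, since $\lambda'\in G\lambda$ with $\lambda'\preceq\lambda$ forces $\lambda'=\lambda$ (otherwise iterating the automorphism would produce an infinite strictly decreasing chain inside the finite down-set of $\lambda$). This is worth noting because ``positive integer'' alone would not guarantee invertibility of $\psi_l$ over a field of positive characteristic, whereas the value $1$ does; with that observation your recursion, and hence the whole argument, is valid over an arbitrary field, exactly as in the paper's setting.
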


\begin{proof}
    Homomorphism $\bar \varphi$ is a monomorphism  because $\varphi$ is monomorphism. To prove that the image $\bar\varphi(\bar A)$ is dense in $\hat A$, let us choose any basic element $e_{(l,c)}\in A$.
    By definition, the image of it is $\varphi(e_{(l,c)})=\sum_{l'\succeq l}\xi(l,c;l') e_{c(l')}$. We shell prove that the first summand of this row belongs to the closure of $\varphi(A)$. Take $l'\succeq l$ such that if
$x\in L$ and $l\preceq x\preceq l'$  then either $x=l$ or $x=l'$.
    The image of $e_{(l',c)}$ is $\sum_{l''\succeq l'}\xi(l',c;l'')e_{c(l'')}$.
    Choosing appropriate coefficient $\beta_{l'}$ we get that $l'$-th component of $\varphi(e_{(l,c)})-\beta_{l'}\varphi(e_{(l',c)})$ is zero.
    Continuing inductively this procedure, we obtain the row
 $\varphi(e_{(l,c)})-\sum_{l'\succ l}\beta_{l'}\varphi(e_{(l',c)})$ that converges to $e_{c(l)}$.
    The induction is valid here because for each $l'\in L$ there is finitely many $l\in L$  such that $l\preceq l'$.
\end{proof}

\section*{Acknowledgments}

The work of second author was supported, in part,  by Ministry of Education and Science of the Russian Federation under contract 8498,
Russian Federation Government Grant No. 2010-220-01-077, ag.no.11.G34.31.0005, NSh-4850.2012.1, RFBR grants 11-01-00289.  The study of second author was carried within "The National Research University Higher School of Economics" Academic Fund Program in 2013-2014, research grant No. 12-01-0122.

A. Alekseevski

Belozersky inst. of Moscow State University, Leninskie Gory 1-40, Moscow 119991, Russia

Scientific Research Institute for System Studies (NIISI RAN), Moscow, Russia

aba@belozersky.msu.ru

S.Natanzon

National Research University Higher School of Economics, Moscow Vavilova 7, Russia

Belozersky inst. of Moscow State University,
Leninskie Gory 1-40, Moscow 119991, Russia

Institute for Theoretical and Experimental Physics, Moscow, Russia

natanzons@mail.ru

\end{document}